\documentclass[12pt]{amsart}
\usepackage{graphicx}
\usepackage{amsmath}
\usepackage{amssymb} 
\usepackage[usenames,dvipsnames]{color}
\usepackage{pinlabel}

\newtheorem{thm}{Theorem}[section]  
\newtheorem{cor}[thm]{Corollary}

\newtheorem{defin}[thm]{Definition} 
\newtheorem{lemma}[thm]{Lemma} 
 
\newtheorem{prop}[thm]{Proposition}

\newcommand{\Emb}{\operatorname{Emb}}
\newcommand{\Unk}{\operatorname{Unk}}
\newcommand{\Diff}{\operatorname{Diff}}
\newcommand{\GL}{\operatorname{GL}}
\newcommand{\Ooo}{\operatorname{O}}
\renewcommand{\H}{\operatorname{{\mathcal H}}}

\newcommand{\I}{\operatorname{I}}
\newcommand{\T}{\operatorname{T}}

\newcommand{\mbbR}{\mathbb{R}}

\newcommand{\aaa}{\mbox{$\alpha$}}
\newcommand{\anch}{\mbox{$\mathfrak{A}$}}
\newcommand{\AF}{\mbox{$\mathfrak{AF}$}}
\newcommand{\bbb}{\mbox{$\beta$}}

\newcommand{\free}{\mbox{$\mathfrak{F}$}} 

\newcommand{\kkk}{\mbox{$\kappa$}}
\renewcommand{\lll}{\mbox{$\lambda$}} 

\newcommand{\ooo}{\mbox{$\omega$}}

\newcommand{\Sss}{\mbox{$\Sigma$}}

\newcommand{\bdd}{\mbox{$\partial$}}
\newcommand{\inter}{\mbox{${\rm int}$}}
\def\zed{{\mathbb Z}}

\begin{document}  

\title[Genus $g+1$ Goeritz group]{Generating the genus $g+1$ Goeritz group of a genus $g$ handlebody}   

\author{Martin Scharlemann}
\address{\hskip-\parindent
        Martin Scharlemann\\
        Mathematics Department\\
        University of California\\
        Santa Barbara, CA USA}
\email{mgscharl@math.ucsb.edu}

\thanks{Research partially supported by National Science Foundation grants.}

\date{\today}

\begin{abstract}  A specific set of $4g+1$ elements is shown to generate the Goeritz group of the genus $g+1$ Heegaard splitting of a genus $g$ handlebody.  These generators are consistent with Powell's proposed generating set for the Goeritz group of the genus $g+1$ splitting of $S^3$.  There are two proofs: one using purely classical techniques and one using thin position.  
\end{abstract}

\maketitle

\section{Introduction}

Following early work of Goeritz \cite{Go}, the {\em genus $g$ Goeritz group} of the $3$-sphere can be described as the isotopy classes of orientation-preserving homeomorphisms of the $3$-sphere that leave the genus $g$ Heegaard splitting invariant.  Goeritz identified a finite set of generators
for the genus $2$ Goeritz group; that work has been recently updated, extended and completed, to give a full picture of the group (see \cite{Sc}, \cite{Ak}, \cite{Cho}).  Goeritz' set of generators was
extended by Powell \cite{Po} to a set of generators for all higher genus Goeritz groups, but his proof that the generators suffice contained a gap \cite{Sc}.  The finite set of elements that Powell proposed
as generators for the full Goeritz group remains a very plausible set, though a proof remains elusive.

One intriguing aspect of the problem is that Gabai's powerful technique of thin position \cite{Ga} is available for objects in $S^3$, such as Heegaard splitting surfaces (see \cite{ST2}), but the technique was not known to Powell.  In addition, one can imagine structuring a proof by induction on the ``co-genus" $k$ of Heegaard splittings of a handlebody: any genus $g$ Heegaard splitting of a genus $g-k$ handlebody $H$ gives rise to a genus $g$ splitting of $S^3$, by identifying $H$ with one of the handlebodies in the standard genus $g - k$ splitting of $S^3$.  In that context, Powell's conjecture would suggest a natural set of generators for the genus $g$ Goeritz group of a genus $g-k$ handlebody (see Section \ref{sect:toGoeritz} for the definition).  As $k$ ascends we eventually have a set of generators for the genus $g$ Goeritz group of the genus $0$ handlebody $B^3$ (or, equivalently, $S^3$).  With that strategy in mind, here we verify Powell's conjecture for the first and easiest case, namely co-genus $1$.  Rephrasing slightly, we exhibit, for any genus $g$ handlebody $H$, a certain finite set of elements that generates the genus $g+1$ Goeritz group $G(H, \Sigma)$ of $H$.  Combining the results of Theorems \ref{thm:main1} and \ref{thm:goeritz}, to which we refer for notation, we show:

\begin{thm} The Goeritz group $G(H, \Sigma)$ of the genus $g$ handlebody $H$ is generated by $4g+1$ elements, namely $2g$ generators of the subgroup $\anch_{\{E_1, ..., E_g\}}$ and $2g+1$ generators of the subgroup $\free_{E_0}$.  
\end{thm}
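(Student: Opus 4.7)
The theorem is presented as a direct combination of Theorems \ref{thm:main1} and \ref{thm:goeritz}, so the plan is to deduce it immediately from those two. Theorem \ref{thm:main1} would establish that $G(H, \Sigma)$ is generated by the union of the two subgroups $\anch_{\{E_1, \ldots, E_g\}}$ and $\free_{E_0}$, while Theorem \ref{thm:goeritz} would supply explicit generating sets of sizes $2g$ and $2g+1$ for those subgroups. Taking the union of the two explicit lists then produces $4g+1$ elements that generate the entire Goeritz group.

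The substance therefore lies in the two underlying theorems. For Theorem \ref{thm:main1}, my strategy would be to fix a standard disk system $\{E_0, E_1, \ldots, E_g\}$ for $H$, take an arbitrary $\phi \in G(H, \Sigma)$, and produce an element $\psi \in \free_{E_0}$ so that $\psi \circ \phi$ preserves the anchored disks $\{E_1, \ldots, E_g\}$ up to isotopy; by definition $\psi \circ \phi$ then lies in $\anch_{\{E_1, \ldots, E_g\}}$, giving the required factorization. Producing $\psi$ reduces to a disk-uniqueness statement: once $E_0$ is pinned down, any two completions of $E_0$ to a full disk system of $H$ should be related by an ambient isotopy that fixes $E_0$. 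The abstract advertises two avenues: a classical innermost-disk/outermost-arc surgery argument, and a thin position / sweepout argument in the spirit of \cite{Ga} and \cite{ST2}. I expect the principal obstacle, in either approach, to be keeping the anchor disk $E_0$ genuinely under control during the simplification, so that every intermediate move is honestly realizable by an element of $\free_{E_0}$ rather than merely by an ambient isotopy that ignores $E_0$.

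For Theorem \ref{thm:goeritz}, the plan would be to identify each of the two subgroups as a mapping class group of a simpler auxiliary object. Cutting along the preserved disks should reduce $\anch_{\{E_1, \ldots, E_g\}}$ to a well-understood mapping class group whose $2g$ generators arise from standard slides and twists associated with the $g$ handles of $H$, while $\free_{E_0}$ governs how the extra $(g+1)$-st handle of $\Sigma$ may be dragged around $H$ relative to the fixed $E_0$, yielding $2g+1$ generators as natural handle slides plus a Dehn twist. The counts should then follow from essentially standard mapping class group bookkeeping rather than from new topological input, with the only real care needed being to verify that the proposed generators genuinely preserve the anchor data in each case.
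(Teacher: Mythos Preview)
You have the roles of the two theorems reversed, and this reflects a deeper misreading of the setup. In the paper, Theorem~\ref{thm:main1} is a statement about $\pi_1(\Unk(I,H))$, the fundamental group of the space of unknotted arcs in $H$: it asserts that this group is generated by $\anch_{\{E_1,\ldots,E_g\}}$ and $\free_{E_0}$, and the $2g$ and $2g+1$ generator counts are already established in Section~\ref{sect:unkinH} from the braid-group description of $\anch$ and the extension $\free_{E_0}\to\pi_1(\bdd H)$. Theorem~\ref{thm:goeritz}, by contrast, is the bridge $G(H,\Sigma)\cong\pi_1(\Unk(I,H))$, proved by homotopy-theoretic arguments on diffeomorphism spaces (Hatcher's theorems, Earle--Eells, etc.). So the paper never works directly inside $G(H,\Sigma)$ at all; the entire generation argument takes place in the arc-isotopy setting.

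Your proposed strategy for Theorem~\ref{thm:main1} also rests on misidentifications of the objects involved. The disk $E_0$ is not a meridian disk of $H$: it is a \emph{parallelism disk} for the unknotted arc $I_0$, with $\bdd E_0 = I_0 \cup \aaa_0$ for an arc $\aaa_0\subset\bdd H$; and $\free_{E_0}$ is not a mapping class group preserving $E_0$ but the subgroup of $\pi_1(\Unk(I,H))$ represented by isotopies that carry the disk $E_0$ back to itself. Likewise $\anch_{\{E_1,\ldots,E_g\}}$ is not the stabilizer of a disk system; it is the subgroup represented by isotopies of $I_0$ that fix the end $x_0$ and keep the moving arc disjoint from the meridians $E_1,\ldots,E_g$ throughout. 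Your factorization plan---find $\psi\in\free_{E_0}$ so that $\psi\circ\phi$ ``preserves $\{E_1,\ldots,E_g\}$''---does not land in $\anch$, since preserving those disks setwise is a much weaker condition than having a representative isotopy that avoids them. The paper's actual route (Lemmas~\ref{lemma:diskback}--\ref{lemma:drop1} and Theorem~\ref{thm:main2}) instead shows that the combined subgroup $\AF$ is independent of all the disk choices and then runs an induction on the intersection number $|\aaa_0\cap\aaa_\omega|$ of parallel arcs; the alternative thin-position proof in Section~4 thins the isotopy to width~$\leq 1$. Neither resembles a disk-system uniqueness argument of the type you sketch.
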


We will give two proofs that these generators suffice: the first is along classical lines (i. e. without thin position) and the second uses thin position.  Both arguments are given in a slightly different setting -- the isotopies are of an unknotted arc in the handlebody, rather than a Heegaard surface -- but the connection between the two is explained in Section \ref{sect:toGoeritz}.

\section{Embedding an unknotted arc in a ball}

For $M, N$ smooth manifolds, let $\Emb(M, N)$ denote the space of smooth proper embeddings of $M$ into $N$.  Let $\Emb_0(I, B^3) \subset \Emb(I, B^3)$ denote the path-component consisting of those embeddings for which the image is an unknotted arc.  There is a natural fibration $\Emb_0(I, B^3) \to \Emb(\bdd I, \bdd B^3)$ whose fiber is $\Emb_0(I, B^3\; rel\; \bdd I)$ \cite{Pa}.   Following Hatcher's proof of the Smale conjecture, this fiber is contractible \cite[Appendix (6)]{Ha2}, so in particular $\pi_1(\Emb_0(I, B^3)) \cong \pi_1(\Emb(\bdd I, \bdd B^3))$.  The space $\Emb(\bdd I, \bdd B^3)$ is the configuration space $F_2(S^2)$ of ordered pairs of points in the sphere; its fundamental group is the pure braid group of two points in $S^2$, which is trivial.  Hence $\Emb_0(I, B^3)$ is simply connected.  By taking each element of $\Emb_0(I, B^3)$ to its image in $B^3$ we get a natural map $\Emb_0(I, B^3) \to \Unk(I, B^3)$, the space of unknotted arcs in $B^3$; its fiber is the space of automorphisms of the interval $I$, which consists of two contractible components, representing orientation preserving and orientation reversing automorphisms of the interval.  Combining these two observations we discover that the natural map from $\Unk(I, B^3)$ to the configuration space $C_2(S^2)$  of unordered pairs of points in $S^2$ induces an isomorphism between the respective fundamental groups.  Note that $\pi_1(C_2(S^2))$ is commonly called the full braid group $B_2(S^2)$.  We conclude that $\pi_1(\Unk(I, B^3)) \cong B_2(S^2) \cong \mathbb{Z}_2$, \cite[Theorem 1.11]{Bi}.

Now suppose $P$ is a connected planar surface in $\bdd(B^3)$ and $\Unk_P(I, B^3)$ is the space of all unknotted arcs in $B^3$ whose end points lie in $P$.  Exactly the same argument as above shows that $\pi_1(\Unk_P(I, B^3)) \cong B_2(P)$, where the latter is the full braid group of two points in $P$.  It is straightforward to identify a set of generators for $B_2(P)$.  Begin with the ordered configuration space $C_2(P)$ and project to the first point $x_0 \in P$.  The map is a fibration $p: C_2(P) \to C_1(P) = P$ whose fiber is $C_1(P - \{point\}) = P - \{point\}$ \cite{FN}.    Since $P$ is connected and $\pi_2(P)$ is trivial, it follows that $\pi_1(C_2(P))$ is an extension of $\pi_1(P)$ by $\pi_1(P - \{point\})$ and each of these groups admits a well-known collection of generators, one for each boundary component of $P$.  Namely, for each boundary component choose a loop from the base point that is parallel in $P - \{point\}$ to that component.  One of these generators is redundant in $\pi_1(P)$; all are needed in $\pi_1(P - \{point\})$.  To complete this set of generators to a set of generators for $B_2(P)$, add an isotopy of the pair of points that interchanges the pair.  

These rather abstract descriptions translate to this concrete description of a set of generators for $\pi_1(\Unk_P(B^3)) \cong B_2(P)$:  Let $\alpha$ be a short arc in $P$; its endpoints $x_0, x_1$ will be the pair of points whose motion we are describing.  Half rotation of $\alpha$ around its center, exchanging its ends is one generator for $B_2(P)$; call it the {\it rotor} $\rho_0$.   Let $c_1, ..., c_p$ be the boundary components of $P$ and for each $c_i$ choose a loop $\gamma_i$  in $P$ that passes through $x_1$ and is parallel in $P - x_0$ to $c_i$.  Choose these loops so that they intersect each other or  $\aaa$ only in the point $x_1$ (see Figure \ref{fig:AgenP}).  For each $1 \leq i \leq p$ let $\rho_i$ be an isotopy that moves the entire arc $\alpha$ through a loop in $P$ parallel to $\gamma_i$ and back to itself.  This defines each $\rho_i$ up to multiples of the rotor $\rho_0$, so the subgroup $\free$ of $B_2(P)$ defined as that generated by $\rho_i, 0 \leq i \leq p$ is in fact well-defined.  Call $\free$ the {\it freewheeling} subgroup.  $\free$ is an extension of $\pi_1(P)$ by $\mathbb{Z}$.  

A second subgroup $\anch \subset \pi_1(\Unk_P(B^3)) \cong B_2(P)$, called the {\it anchored} subgroup, is defined as those elements which keep the ``anchor" end $x_0$ of $\aaa$ fixed as the other end follows a closed path in $P - \{x_0\}$ that begins and ends at $x_1$.  It corresponds to the fundamental group of the fiber $P - \{point\}$ in the above fibration. More concretely, for each 
$1 \leq i \leq p$ let $\mathfrak{a}_i$ denote the element determined by keeping $x_0$ fixed and 
moving $x_1$ around the loop $\gamma_i$.  The subgroup $\anch$ is generated by the $\mathfrak{a}_i$; it includes any even power of the rotor $
\rho_0$,  via the relation (in Figure \ref{fig:AgenP}) $\mathfrak{a_1}\mathfrak{a_2}...\mathfrak{a_p} = \rho^2$.  

The fibration above shows that together $\anch$ and $\free$ generate the group $\pi_1(\Unk_P(B^3))$, so $\{ \rho_i, 0 \leq i \leq p \}$ and $\{ \mathfrak{a}_i, 1 \leq i \leq p \}$ together constitute a set of generators for $\pi_1(\Unk_P(B^3)).$

 \begin{figure}[ht!]
 \labellist
\small\hair 2pt
\pinlabel $c_1$ at 185 125
\pinlabel $c_2$ at 385 130
\pinlabel $c_i$ at 335 43
\pinlabel $\gamma_1$ at 263 125
\pinlabel $\gamma_2$ at 300 125
\pinlabel $\gamma_i$ at 270 40
\pinlabel $\aaa$ at 190 93
\pinlabel $x_0$ at 140 90
\pinlabel $x_1$ at 305 85
\pinlabel $P$ at 280 160
 \endlabellist
    \centering
    \includegraphics[scale=0.7]{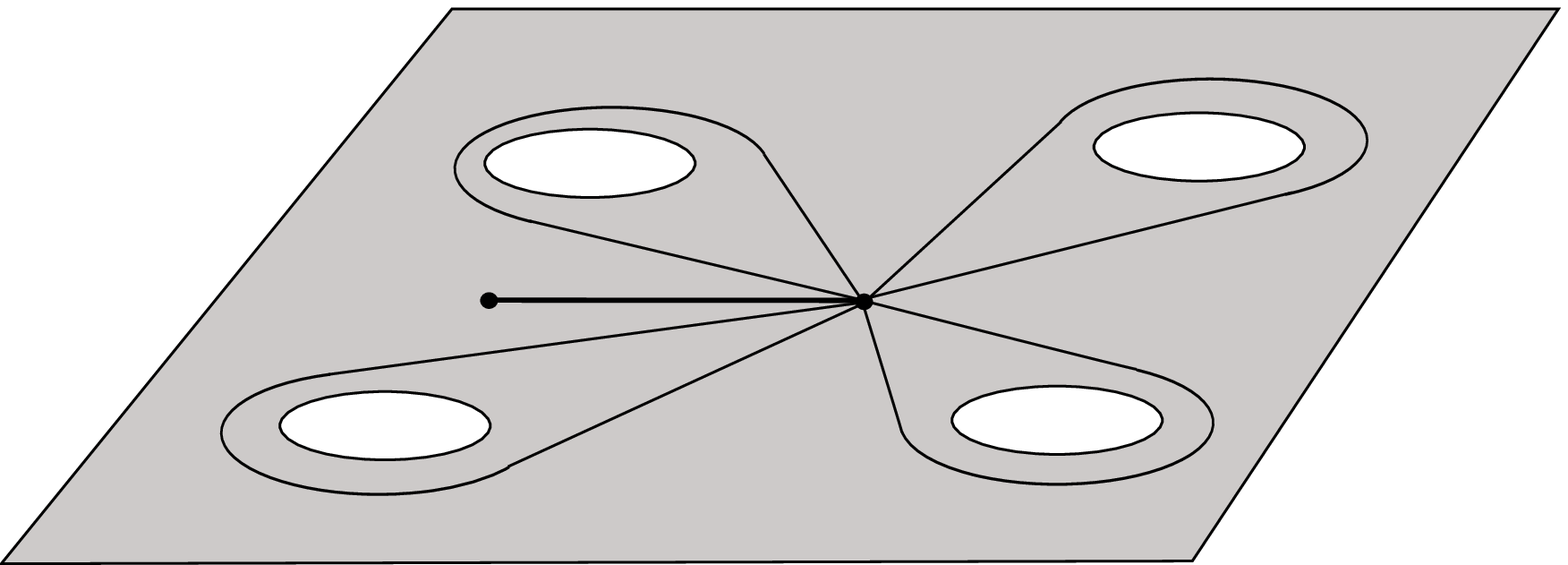}
    \caption{Generating $\anch \subset B_2(P)$} \label{fig:AgenP}
    \end{figure}

\section{Unknotted arcs in a handlebody} \label{sect:unkinH}

The goal of this section is to extend this analysis to describe, for a genus $g$ handlebody $H$, a fairly natural set of generators for the fundamental group of the space $\Unk(I, H)$ of unknotted properly embedded arcs in $H$.  

We begin with a basepoint for $\Unk(I, H)$, i. e. a fixed choice of unknotted arc in $H$.  This is facilitated by viewing $H$ as the product of a planar surface with $I$:  Let $Q$ be a disk $D$ from which $g$ disks $D_1, ..., D_g$ have been removed.  Picture the disks $D_i$ as laid out in a horizontal row in $D$, with a vertical arc $\bbb_i, 1 \leq i \leq g$ descending from each $\bdd D_i \subset \bdd Q$ to $\bdd D$. Further choose a point $x \in inter(Q) - \cup_i \bbb_i$ to the left of the disks $D_i$ and connect it to $\bdd D$ by a horizontal arc $\bbb_0$.  See Figure \ref{fig:QxI}.  Then $Q \times I$ is a handlebody in which $x \times I$ is an unknotted arc $I_0$ in $H$ with end points $x_i = x \times \{i\}, i = 0, 1$.  Furthermore, the $g$ disks $E_i = \bbb_i \times I \subset H, i = 1, ..., g$ constitute a complete collection of meridian disks for $H$.  That is, the complement in $H$ of a regular neighborhood $\eta(\cup_{i=1}^g E_i)$ of $\cup_{i=1}^g E_i$ is a $3$-ball $B^3$ which intersects $\bdd H$ in a planar surface $P$.  The boundary of $P$ has $2g$ components, two copies of each $\bdd E_i, i = 1, ..., g$.

 \begin{figure}[ht!]
 \labellist
\small\hair 2pt
\pinlabel $D_1$ at 90 145
\pinlabel $D_2$ at 190 145
\pinlabel $D_g$ at 290 145
\pinlabel $\beta_1$ at 110 100
\pinlabel $\beta_2$ at 210 100
\pinlabel $\beta_g$ at 300 100
\pinlabel $\beta_0$ at 15 155
\pinlabel $I_0$ at 45 110
\pinlabel $x_0$ at 45 70
\pinlabel $x_1$ at 45 145
\pinlabel $E_0$ at 10 90
\pinlabel $E_1$ at 95 50
\pinlabel $E_2$ at 200 40
\pinlabel $E_g$ at 285 50
\pinlabel $Q$ at 160 190
 \endlabellist
    \centering
    \includegraphics[scale=0.7]{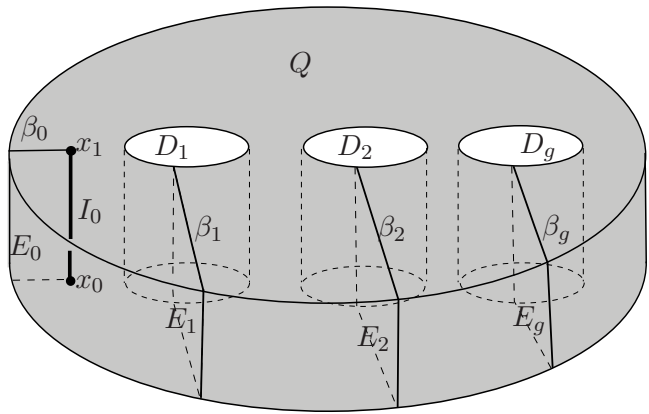}
    \caption{A genus $g$ handlebody $H$ with trivial arc $I_0$} \label{fig:QxI}
    \end{figure}

The disk $E_0 = \bbb_0 \times I$ defines a parallelism between the arc $I_0$ and an arc $\aaa \subset \bdd H$.  Such a disk will be called a {\em parallelism disk} for $I_0$ and the subarc of its boundary that lies on $\bdd H$ will be called a {\em parallel arc} for $I_0$.  It will be convenient, when considering a pair $E_0, E_0'$ of parallelism disks and corresponding parallel arcs $\aaa_0 = E_0 \cap \bdd H, \aaa_0' = E_0' \cap \bdd H$ for $I_0$, to isotope the disks so that they are transverse except where they coincide along $I_0$, and so that they have disjoint interiors near $I_0$.  (This is done by unwinding $E_0'$ along $I_0$).    A standard innermost disk argument shows that the simple closed curves in $E_0 \cap E_0'$ can be removed by an isotopy that does not move $I_0$, after which what remains of $E_0 \cap E_0'$ is their common boundary arc $I_0$ together with a collection of interior arcs whose endpoints are the points of $\aaa_0 \cap \aaa_0'$.  Call this a {\em normal position} for two parallelism disks.

Motivated by the discussion above, we note some obvious elements and subgroups of $\pi_1(\Unk(I, H))$:  Since the pair $(B^3, P)$ is a subset of $(H, \bdd H)$ there is a natural inclusion-induced homomorphism  $\pi_1(\Unk_P(I, B^3)) \to \pi_1(\Unk(I, H)$.  For example, a natural picture of the rotor $\rho_0$ in $\pi_1(\Unk(I, H))$ is obtained by doing a half-twist of $I_0$ in a $3$-ball neighborhood of the disk $\bbb_0 \times I$.  This is shown on the left in Figure \ref{fig:rotorfrak}.  The image of the anchored subgroup $\anch \subset B_2(P)$ in  $\pi_1(\Unk(I, H))$ can be defined much like the anchored subgroup in $B_2(P)$ itself:  hold the end of $I_0$ at $x_0$ fixed while isotoping the end at $x_1$ so that the whole arc $I$ moves around and back to its original position, never letting the moving $I$ intersect any of  the g disks $E_i$.  We denote this subgroup $\anch_{\{E_1, ..., E_g\}} \subset  \pi_1(\Unk(I, H))$.  Two of its generators are shown center and right in Figure \ref{fig:rotorfrak}. There is also a naturally defined {\em freewheeling} subgroup $\free_{E_0} \subset \pi_1(\Unk(I, H))$ consisting of those elements represented by a proper isotopy of the disk $E_0$ through $H$ and back to itself (though perhaps with orientation reversed).  Thus again the rotor $\rho_0$ lies in $\free_{E_0}$, and the kernel of $\free_{E_0} \to \pi_1(\bdd H)$ is generated by the rotor $\rho_0$.  Since $\pi_1(\bdd H)$ is itself generated by $2g$ elements (essentially given by the choice of $\{ E_1, ..., E_g \}$), $\free_{E_0}$ is generated by $2g+1$ elements.

 \begin{figure}[ht!]
 \labellist
\small\hair 2pt
\pinlabel $\rho_0$ at 23 100
\pinlabel $\mathfrak{a}_1$ at 165 90
\pinlabel $\mathfrak{a}_1'$ at 360 135
 \endlabellist
    \centering
    \includegraphics[scale=0.7]{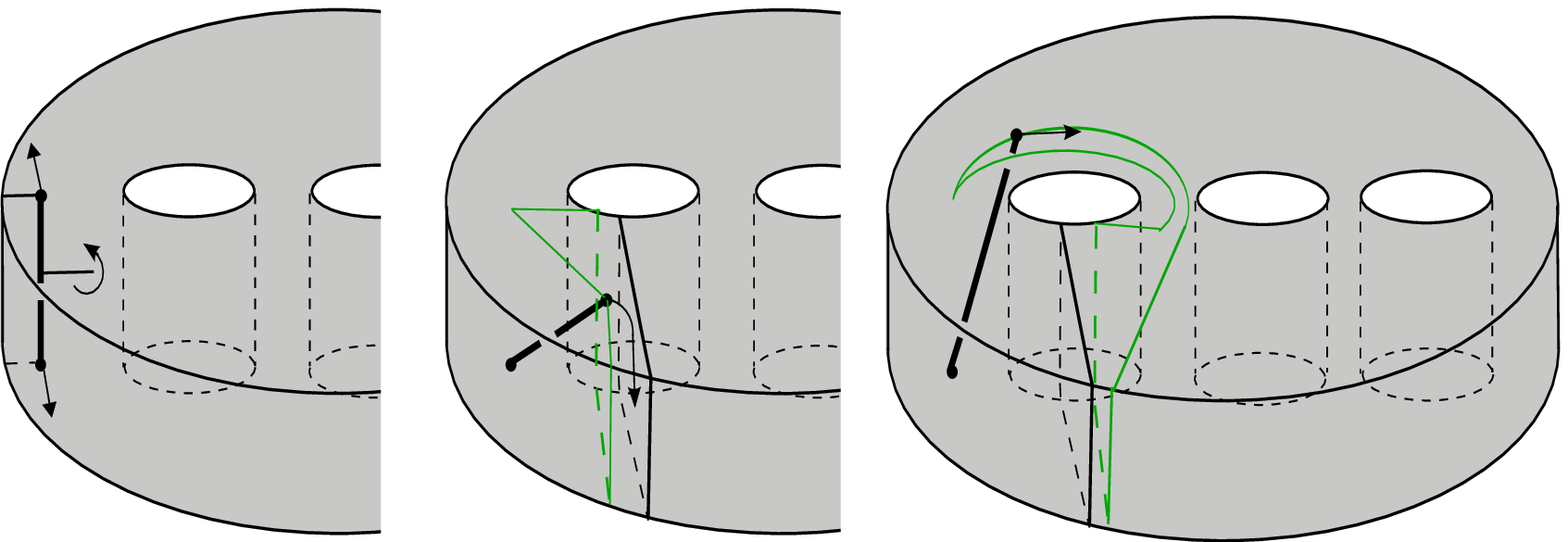}
\caption{The rotor $\rho_0$ and generators $\mathfrak{a}_1, \mathfrak{a}_1' \in \anch$.} \label{fig:rotorfrak}
    \end{figure}

Suppose $\ooo \in \pi_1(\Unk(I, H))$ is represented by a proper isotopy $f_t: I \to H$.  The isotopy extends to an ambient isotopy of $H$ which we continue to denote $f_t$; let $\aaa_{\ooo} \subset \bdd H$ denote $f_1(\aaa_0)$.  Since $f_1(I_0) = I_0$, $f_1(E_0)$ is a new parallelism disk, and $\aaa_{\ooo}$ is the corresponding parallel arc for $I_0$ in $\bdd H$.

\begin{lemma}  \label{lemma:diskback} The parallel arcs $\aaa_0$ and $\aaa_{\ooo}$ for $I_0$ are isotopic rel end points in $\bdd H$ if and only if $\ooo \in \free_{E_0}$.
\end{lemma}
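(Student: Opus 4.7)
The plan is to prove the two implications separately, with the reverse direction carrying the technical content.

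For $(\Leftarrow)$, suppose $\ooo \in \free_{E_0}$. By definition $\ooo$ admits a representative realized by an ambient isotopy $f_t$ with $f_1(E_0) = E_0$ as sets. Then $f_1(\aaa_0) = f_1(E_0 \cap \bdd H) = E_0 \cap \bdd H = \aaa_0$, so $\aaa_{\ooo}$ simply coincides with $\aaa_0$ and is trivially isotopic to it rel endpoints.

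For $(\Rightarrow)$, suppose an ambient isotopy $f_t$ represents $\ooo$ and that $\aaa_{\ooo} = f_1(\aaa_0)$ is isotopic to $\aaa_0$ rel endpoints in $\bdd H$. The goal is to produce a new representative of $\ooo$ whose time-$1$ map sends $E_0$ setwise to $E_0$. First I would modify $f_t$ so that the time-$1$ map already fixes $\bdd E_0$ setwise. Choose an ambient isotopy $j_t : \bdd H \to \bdd H$ realizing the given boundary isotopy (so $j_0 = \mathrm{id}$, $j_1(\aaa_{\ooo}) = \aaa_0$, and $j_t$ fixes $x_0, x_1$ throughout), and extend $j_t$ to an ambient isotopy of $H$ that pointwise fixes $I_0$ (possible because $j_t$ already fixes the endpoints of $I_0$ and $I_0$ is collared in $H$). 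Setting $h_t = j_t \circ f_t$ produces another loop in $\Unk(I, H)$ based at $I_0$; because $j_t$ pointwise fixes $I_0$, the two-variable map $(s, t) \mapsto j_{st}(f_t(I_0))$ is a homotopy of loops from $f_t(I_0)$ to $h_t(I_0)$, so $h_t$ still represents $\ooo$. By construction $h_1(I_0) = I_0$ and $h_1(\aaa_0) = \aaa_0$ setwise, hence $h_1(\bdd E_0) = \bdd E_0$ setwise.

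The next task is to further arrange $h_1(E_0) = E_0$. The disks $E_0$ and $h_1(E_0)$ are properly embedded in the handlebody $H$ and share the boundary $I_0 \cup \aaa_0$. After a small perturbation of $h_1(E_0)$ rel boundary, the two disks meet transversely in their interiors. An outermost-arc argument then removes any interior arcs of intersection: an outermost arc on one disk cuts off a subdisk across which the other can be pushed, strictly reducing intersection complexity. Once only simple closed curve intersections remain, an innermost-disk argument together with the irreducibility of $H$ eliminates these. At that point $E_0 \cup h_1(E_0)$ is a $2$-sphere in $H$, bounding a $3$-ball $B$ by irreducibility; pushing $h_1(E_0)$ across $B$ gives an isotopy $k_t$ fixing $\bdd E_0$ that carries $h_1(E_0)$ to $E_0$. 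Extending $k_t$ to an ambient isotopy of $H$ pointwise fixing $I_0$ and concatenating with $h_t$ yields a representative of $\ooo$ whose time-$1$ map sends $E_0$ setwise to $E_0$, exhibiting $\ooo \in \free_{E_0}$.

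The hard part will be the final disk-uniqueness step, namely the claim that two parallelism disks for $I_0$ with common boundary are isotopic rel boundary inside $H$. The outermost-arc and innermost-disk tools themselves are standard and use only the irreducibility of the handlebody, but some care is needed when arranging the two disks in general position since their boundaries already coincide; this is essentially where the normal-position discussion preceding the lemma is used.
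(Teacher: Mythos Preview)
Your argument is correct and follows the same outline as the paper's: the forward direction is immediate from the definition, and for the converse you first absorb the boundary isotopy to arrange $\aaa_{\ooo}=\aaa_0$, then invoke a standard innermost-disk argument to isotope $f_1(E_0)$ to $E_0$ rel boundary.

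The one difference worth noting is that the paper carries out the disk argument not in $H$ but in the handlebody $H-\eta(I_0)$: once $\aaa_{\ooo}=\aaa_0$, the truncated disks $E_0-\eta(I_0)$ and $f_1(E_0)-\eta(I_0)$ are genuinely properly embedded in $H-\eta(I_0)$ with common boundary lying entirely on $\partial(H-\eta(I_0))$, so the standard innermost-disk argument applies verbatim and any resulting isotopy automatically fixes $I_0$. Your version, working directly in $H$, is fine but carries some overhead you partly acknowledge: since the two disks already share their \emph{entire} boundary $I_0\cup\aaa_0$, after a rel-boundary perturbation the interior intersection consists only of circles, so your outermost-arc step is vacuous; and the ``sphere'' $E_0\cup h_1(E_0)$ meets $\partial H$ along $\aaa_0$, so one must push it slightly into the interior before invoking irreducibility, and then check that the final ambient isotopy across the ball can be taken to fix $I_0$. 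All of this is routine, but the paper's passage to $H-\eta(I_0)$ makes it disappear.
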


\begin{proof}  If $\ooo \in \free_{E_0}$ then by definition $\aaa_{\ooo} = \aaa_0$.  On the other hand, if $\aaa_{\ooo}$ is isotopic to $\aaa_0$ rel end points, then we may as well assume $\aaa_{\ooo} = \aaa_0$, for the isotopy from $\aaa_{\ooo}$ to $\aaa_0$ doesn't move $I_0$.  Then, thickening $I_0$ slightly to $\eta(I_0)$, $E_0$ and $f_1(E_0)$ are properly embedded disks in the handlebody $H - \eta(I_0)$ and have the same boundary.  A standard innermost disk argument shows that then $f_1(E_0)$ may be isotoped rel $\bdd$ (so, in particular, the isotopy leaves $I_0$ fixed) until $f_1(E_0)$ coincides with $E_0$, revealing that $\ooo \in \free$. 
\end{proof}  

A sequence of further lemmas will show:

\begin{thm}  \label{thm:main1} The subgroups $\anch_{\{E_1, ..., E_g\}}$ and $\free_{E_0}$ together generate all of $\pi_1(\Unk(I, H))$, so the union of their generators is an explicit set of generators for $\pi_1(\Unk(I, H))$.
\end{thm}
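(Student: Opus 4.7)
The plan is to show that every $\ooo \in \pi_1(\Unk(I, H))$ lies in the subgroup $\langle \anch_{\{E_1,\ldots,E_g\}} \cup \free_{E_0} \rangle$. By Lemma~\ref{lemma:diskback}, $\free_{E_0}$ is exactly the preimage of $[\aaa_0]$ under the map sending an isotopy class to the class rel endpoints of its terminal parallel arc; thus it suffices to reduce, by successive multiplication by a chosen generator of $\anch$ or $\free$, to an element whose terminal parallel arc is isotopic to $\aaa_0$.

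To do so, I would represent $\ooo$ by an isotopy $f_t$, consider the terminal parallelism disk $D := f_1(E_0)$, and place $D$ in normal position with $E_0$ and in general position with the complete meridian system $\cup_i E_i$, removing any closed-curve components of $D \cap (\cup_i E_i)$ by innermost-disk moves. Define a complexity $n(\ooo) = |D \cap (\cup_i E_i)|$, the number of remaining arcs of intersection, and induct on $n$.

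For the base case $n = 0$, both $D$ and $E_0$ lie in the complementary ball $B^3 = H - \eta(\cup_i E_i)$, and the parallel arcs $\aaa_0, \aaa_{\ooo}$ lie in the planar surface $P = B^3 \cap \bdd H$. Using the preceding section's identification $\pi_1(\Unk_P(I, B^3)) \cong B_2(P)$ with its explicit generating set consisting of elements lying (via inclusion) in our $\anch$ and $\free$, one constructs an element $\tau \in \langle \anch \cup \free \rangle$ with $\aaa_\tau = \aaa_\ooo$; then $\ooo\tau^{-1} \in \free_{E_0}$ by Lemma~\ref{lemma:diskback}, so $\ooo = (\ooo\tau^{-1})\,\tau \in \langle \anch \cup \free \rangle$. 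For the inductive step $n > 0$, select an outermost arc $\beta \subset D \cap E_i$ on $D$, bounding a disk $\Delta \subset D$ with $\Delta \cap (\cup_j E_j) = \beta$ and $\bdd\Delta \setminus \beta \subset \bdd D = I_0 \cup \aaa_\ooo$. If $\bdd\Delta \setminus \beta$ lies on $\aaa_\ooo$, multiplication of $\ooo$ by an appropriate $\mathfrak{a}_i^{\pm 1} \in \anch_{\{E_1,\ldots,E_g\}}$ appends a finger around $\bdd E_i$ to $D$ cancelling $\beta$, so $n$ decreases by one; if $\bdd\Delta \setminus \beta$ lies on $I_0$, a corresponding adjustment by an element of $\free_{E_0}$ achieves the same reduction. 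Either way, the inductive hypothesis applies.

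The hard step will be the rigorous verification of the inductive reduction—in particular, confirming that an outermost $\beta$ of a cooperative type always exists, that its push-off translates cleanly into multiplication by a single named generator, and that degenerate configurations (such as $\beta$ having both endpoints on the same boundary copy of $\bdd E_i$ in $\bdd H$, or outermost disks $\Delta$ whose push-offs interact nontrivially with $\aaa_0$) can be disentangled by a preliminary conjugation by a $\free_{E_0}$ element without increasing $n$. Orchestrating these moves so that $n$ strictly decreases at each stage, and that the algorithm terminates at the base case, is where the main technical work of the proof resides.
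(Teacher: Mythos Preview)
Your overall architecture---reduce to the terminal parallel arc, invoke Lemma~\ref{lemma:diskback}, and induct on a complexity of the terminal parallelism disk---is sound, and in fact the paper's second proof inducts on exactly your $n = |D \cap (\cup_i E_i)|$.  The gap is in your inductive step.  Post-multiplying $\ooo$ by $\mathfrak{a}_i^{\pm 1}$ replaces the terminal arc $\aaa_\ooo$ by $g_1(\aaa_\ooo)$, where $g_t$ is an ambient isotopy dragging $x_1$ once around the \emph{fixed} loop $\gamma_i$.  This does not surgically remove your chosen outermost arc $\beta$: it modifies $\aaa_\ooo$ near $\gamma_i$, and if $\aaa_\ooo$ already crosses $\gamma_i$ several times (which it typically will when $n$ is large), the effect is a global twist that can just as easily raise $n$ as lower it.  The same objection applies to your Case~2: an element of $\free_{E_0}$ moves the whole disk $E_0$ around a loop in $\bdd H$, and there is no reason its terminal action on $D$ should target the subarc of $I_0$ cut off by $\beta$.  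You flag these as ``degenerate configurations'' to be handled later, but the difficulty is not degenerate---it is the generic case.

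The paper's first proof avoids this entirely by \emph{not} holding the meridian system fixed.  Lemmas~\ref{lemma:drop0} and~\ref{lemma:drop1} show that $\AF_{\{E_0,\ldots,E_g\}}$ is independent of the chosen disks, so one may write it simply as $\AF$.  The induction is then on $|\aaa_0 \cap \aaa_\ooo|$ (not on intersections with meridians), and at each step one forms the disk $E_\cup = E_0 \cup f_1(E_0)$ (or an outermost-arc surgery thereof), chooses a \emph{new} complete meridian system disjoint from $E_\cup$, and uses the anchored move circling $E_\cup$ in that new system.  The independence lemmas guarantee this move still lies in $\AF$.  This freedom to adapt the meridian system to the disk at hand is precisely what makes the reduction go through, and it is what your fixed-system, fixed-generator approach is missing.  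The paper's second proof keeps the meridians fixed, as you do, but reduces $n$ by a sweep-and-factor trick rather than by peeling off single generators; that mechanism is also available to you if you want to salvage the present outline.
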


What is perhaps surprising about this theorem is that the subgroups themselves depend heavily on our choice of the disks $\{E_0, ..., E_g\}$.  Recognizing this dependence, let the combined symbol $\AF_{\{E_0, ..., E_g\}}$ denote the subgroup of $\pi_1(\Unk(I, H))$ generated by $\anch_{\{E_1, ..., E_g\}}$ and $\free_{E_0}$.  

\begin{lemma}   \label{lemma:drop0} Suppose $E_0' \subset H$ is another parallelism disk that lies entirely in $H - \{E_1, ..., E_g\}$.  Then $\AF_{\{E_0', E_1, ..., E_g\}} = \AF_{\{E_0, E_1, ..., E_g\}}$.
\end{lemma}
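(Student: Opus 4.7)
Since $\anch_{\{E_1, \ldots, E_g\}}$ depends only on the collection $\{E_1, \ldots, E_g\}$ and not on the choice of parallelism disk, it is contained in both $\AF_{\{E_0, E_1, \ldots, E_g\}}$ and $\AF_{\{E_0', E_1, \ldots, E_g\}}$. It therefore suffices to show $\free_{E_0'} \subseteq \AF_{\{E_0, E_1, \ldots, E_g\}}$; the reverse inclusion will then follow by the same argument with $E_0$ and $E_0'$ interchanged.

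The plan is to produce an anchored element $\alpha \in \anch_{\{E_1, \ldots, E_g\}}$ whose conjugation action carries $\free_{E_0}$ onto $\free_{E_0'}$. Because each of $E_0, E_0'$ is disjoint from $E_1, \ldots, E_g$, both parallel arcs $\aaa_0$ and $\aaa_0'$ lie in the planar surface $P$, sharing the endpoints $x_0, x_1$. Hence $\aaa_0^{-1} \cdot \aaa_0'$ is a loop in $P$ based at $x_1$, representing an element of $\pi_1(P, x_1)$. This group is generated (with one relation) by the loops $\gamma_i$ around the $2g$ boundary components of $P$---precisely the loops used in Section 2 to define the anchored generators $\mathfrak{a}_i$---so $\aaa_0^{-1} \cdot \aaa_0'$ may be written as a word in the $\gamma_i$'s.

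Now $\pi_1(\Unk(I, H))$ acts on isotopy classes of arcs on $\bdd H$ from $x_0$ to $x_1$ by $\ooo \cdot [\aaa_0] := [\aaa_\ooo]$, in the notation preceding Lemma \ref{lemma:diskback}; that lemma identifies $\free_{E_0}$ as precisely the stabilizer of $[\aaa_0]$. Each anchored generator $\mathfrak{a}_i$ acts on arc isotopy classes by loop-concatenation, $\mathfrak{a}_i \cdot [\aaa] = [\aaa \cdot \gamma_i]$, since by construction its defining isotopy drags $x_1$ once around $\gamma_i$ and brings the parallel arc along. Granting this, the product $\alpha$ of anchored generators spelling out the word for $\aaa_0^{-1} \cdot \aaa_0'$ satisfies $\alpha \cdot [\aaa_0] = [\aaa_0']$ in $\bdd H$. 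Since stabilizers of points in a single orbit are conjugate, $\free_{E_0'} = \alpha \free_{E_0} \alpha^{-1}$, which visibly lies in $\AF_{\{E_0, E_1, \ldots, E_g\}}$.

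The main obstacle will be the geometric claim that each anchored generator $\mathfrak{a}_i$ acts on arc isotopy classes by concatenation with the corresponding boundary loop $\gamma_i$. This requires a careful unwinding of the ambient isotopy defining $\mathfrak{a}_i$ to see how the parallel arc is dragged; one must also track orientations and ensure that the word-to-product dictionary between $\pi_1(P)$ and $\anch$ respects the relation $\mathfrak{a}_1 \mathfrak{a}_2 \cdots \mathfrak{a}_{2g} = \rho_0^2$ noted in Section 2, so that the expression for $\aaa_0^{-1} \cdot \aaa_0'$ can always be lifted from $\pi_1(P)$ to a genuine element of $\anch_{\{E_1, \ldots, E_g\}}$.
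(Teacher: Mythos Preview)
Your approach is sound and takes a genuinely different route from the paper's.  You recast the problem group-theoretically: Lemma~\ref{lemma:diskback} identifies $\free_{E_0}$ as the stabilizer of $[\aaa_0]$ under the natural action of $\pi_1(\Unk(I,H))$ on isotopy classes (rel endpoints) of parallel arcs in $\bdd H$, so it suffices to produce an anchored element carrying $[\aaa_0]$ to $[\aaa_0']$.  You propose to do this in one stroke by writing the loop $\aaa_0^{-1}\aaa_0'$ as a word in the $\gamma_i$ and invoking the claim that each $\mathfrak{a}_i$ acts by concatenation with $\gamma_i$.  That flagged claim is correct: the restriction to $\bdd H$ of any ambient extension of the defining isotopy of $\mathfrak{a}_i$ is, up to isotopy rel $\{x_0,x_1\}$, exactly the point-pushing of $x_1$ along $\gamma_i$, and point-pushing acts on homotopy classes of arcs ending at the pushed point by concatenation; since homotopic embedded arcs on a surface are isotopic, the action on isotopy classes is as you describe.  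Your worry about lifting from $\pi_1(P)$ to $\anch$ is also harmless, since $\pi_1(P-\{x_0\})\to\pi_1(P)$ is surjective.

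The paper instead proceeds by induction on the number $|E_0\cap E_0'|$ of interior intersection arcs after putting the two parallelism disks in normal position.  In the base case the disks meet only along $I_0$, so $E_\cup=E_0\cup_{I_0}E_0'$ is an embedded disk in $H-\{E_1,\ldots,E_g\}$; a single anchored element---rotation of one end of $I_0$ fully around a push-off of $\bdd E_\cup$---visibly carries $E_0$ to $E_0'$ and hence conjugates $\free_{E_0}$ to $\free_{E_0'}$.  The inductive step uses an outermost-arc surgery on $E_0'$ to manufacture an intermediate parallelism disk $F_0$ disjoint from $E_0$ and meeting $E_0'$ in fewer arcs.  Thus the paper's base case is precisely your argument in the special situation where $\aaa_0^{-1}\aaa_0'$ is already embedded, and the induction is what replaces your appeal to the generators of $\pi_1(P)$ and the point-pushing description.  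The paper's argument is more hands-on and avoids importing any surface-topology facts about point-pushing; yours is more conceptual and dispenses with the outermost-arc induction entirely, at the cost of the verification you honestly flagged in your final paragraph.
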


\begin{proof}  Put the pair $E_0, E_0'$ in normal position. The proof is by induction on the number $|E_0 \cap E_0'|$ of arcs in which their interiors intersect. 

If $|E_0 \cap E_0'| = 0$, so the disks intersect only in $I_0$, then let $E_{\cup} \subset H - \{E_1, ..., E_g\}$ be the properly embedded disk that is their union.  Rotating one end of $I_0$ fully around a slightly pushed-off copy of $E_{\cup}$ describes an element $\mathfrak{a} \in \anch_{\{E_1, ..., E_g\}}$ for which a representative isotopy carries the disk $E_0$ to $E_0'$. See Figure \ref{fig:Ecup}.  In particular, if $f$ is any element of $\free_{E_0}$ then the product $\mathfrak{a}^{-1} f \mathfrak{a}$ has a representative isotopy which carries $E_0'$ to itself.  Hence
$\mathfrak{a}^{-1} f \mathfrak{a} \in \free_{E_0'}$ so $f \in \AF_{\{E_0', E_1,..., E_g\}}$.  Thus 
$\free_{E_0} \subset \AF_{\{E_0', E_1, ..., E_g\}}$, so $\AF_{\{E_0, E_1,..., E_g\}}\subset \AF_{\{E_0', E_1, ..., E_g\}}$. The symmetric argument shows that $\AF_{\{E_0', E_1, ..., E_g\}} \subset \AF_{\{E_0, E_1,..., E_g\}}$ and so $\AF_{\{E_0, E_1,..., E_g\}} = \AF_{\{E_0', E_1, ..., E_g\}}$ in this case.

 \begin{figure}[ht!]
 \labellist
\small\hair 2pt
\pinlabel $E_{\cup}$ at 38 165
\pinlabel $E_0$ at 10 95
\pinlabel $E_0'$ at 50 120
\pinlabel $\mathfrak{a}$ at 220 150
\pinlabel $\mathfrak{a}$ at 405 150
\pinlabel $\mathfrak{a}$ at 620 160
 \endlabellist
    \centering
    \includegraphics[scale=0.5]{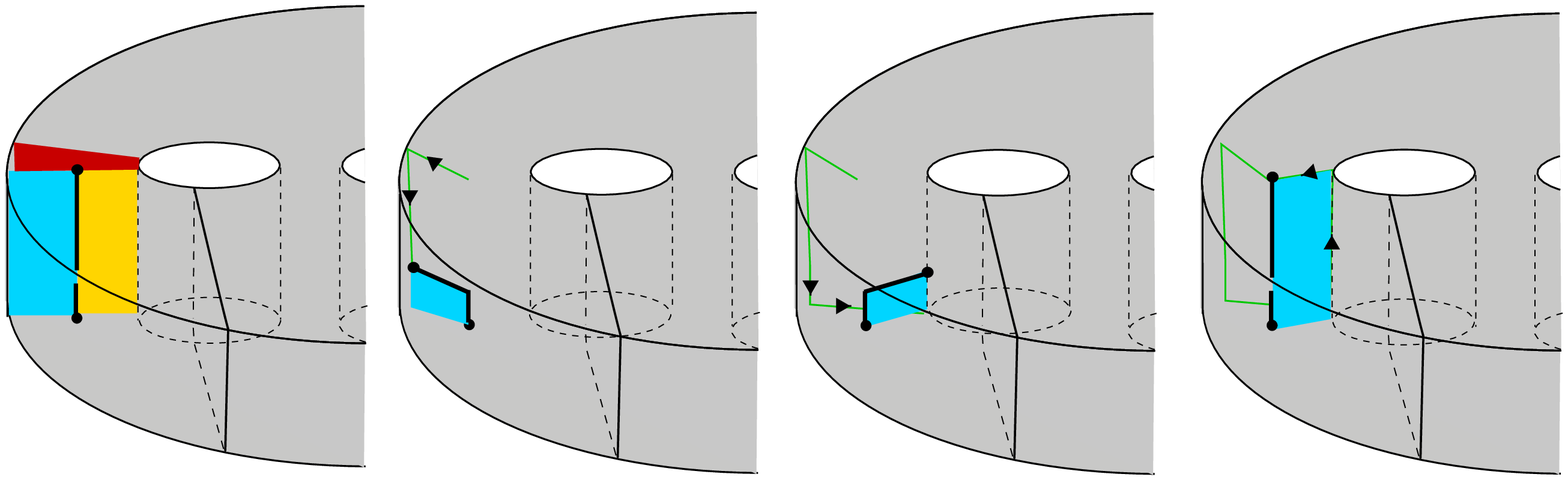}
\caption{$\mathfrak{a} \in \anch$: circling around $E_{\cup}$ brings $E_0$ to $E_0'$.} \label{fig:Ecup}
    \end{figure}

The argument just given shows that for any pair of parallelism disks $F_0, F_0' \subset H - \{E_1, ..., E_g\}$ which intersect only along $I_0$, $\AF_{\{F_0', E_1, ..., E_g\}} = \AF_{\{F_0, E_1, ..., E_g\}}$.  Suppose inductively that this is true whenever $|F_0 \cap F_0'| \leq k$ and, for the inductive step, suppose that $|E_0 \cap E_0'| = k+1.$  Among all arcs of $E_0 \cap E_0'$, let $\bbb$ be outermost in $E_0'$, so that the subdisk $E^* \subset E_0'$ cut off by $\bbb$ does not contain $I_0$ in its boundary, nor any other point of $E_0$ in its interior.   Then attaching $E^*$ along $\bbb$ to the component of $E_0 - \bbb$ that contains $I_0$ gives a parallelism disk $F_0$ that is disjoint from $E_0$ and intersects $E_0'$ in $\leq k$ arcs.  It follows by inductive assumption that $\AF_{\{E_0,E_1, ..., E_g\}} = \AF_{\{F_0, E_1..., E_g\}} = \AF_{\{E_0',E_1, ..., E_g\}}$ as required.
\end{proof}

Following Lemma \ref{lemma:drop0} there is no loss in dropping $E_0$ from the notation, so $\AF_{\{E_0, E_1, ..., E_g\}}$ will henceforth be denoted simply $\AF_{\{E_1, ..., E_g\}}$.

\begin{lemma}   \label{lemma:drop1} Suppose $E_* \subset H$ is a disk in $H - (I_0 \cup E_1 \cup ... \cup E_g\}$, so that $\{E_*, E_2, ..., E_g\}$ is a complete set of meridian disks for $H$.  Then $\AF_{\{E_1, E_2, ..., E_g\}} = \AF_{\{E_*, E_2, ..., E_g\}}$.  That is, the subgroup $\AF_{\{-,  E_2, ..., E_g\}}$ is the same, whether we fill in $E_1$ or $E_*$.  
\end{lemma}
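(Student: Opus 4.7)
The plan is to produce an ambient isotopy $\phi_t$ of $H$ that fixes $I_0$ pointwise for every $t$, fixes each $E_i$ ($i \geq 2$) setwise, and satisfies $\phi_1(E_1) = E_*$. Then conjugation by $\phi_1$ will identify $\anch_{\{E_1, E_2, \ldots, E_g\}}$ with $\anch_{\{E_*, E_2, \ldots, E_g\}}$ inside $\pi_1(\Unk(I, H))$, while a convenient common choice of $E_0$ (together with Lemma \ref{lemma:drop0}) will equate the two freewheeling subgroups.

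To build $\phi_t$, first cut $H$ along $E_2, \ldots, E_g$ to obtain a solid torus $H_1$. Both $E_1$ and $E_*$ are essential meridian disks of $H_1$ (this is where the hypothesis that $\{E_*, E_2, \ldots, E_g\}$ is a complete system is used), and being disjoint they cobound a product region $R \subset H_1$; the closure of $H_1 \setminus R$ is another such product region $R'$. Since $I_0 \subset H_1$ is disjoint from $E_1 \cup E_*$, it lies in exactly one of $\inter(R), \inter(R')$; after relabeling, assume $I_0 \subset \inter(R')$. Picturing $H_1$ as $D^2 \times S^1$ with $E_1$ and $E_*$ the meridian disks at distinct angles $\theta_1, \theta_* \in S^1$, the open arc of $S^1$ from $\theta_1$ to $\theta_*$ on the $R$-side avoids the angle $\theta_x$ at which $I_0$ sits. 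Choose an isotopy $\psi_t: S^1 \to S^1$ supported in a slightly enlarged open arc that contains $\theta_1, \theta_*$ but still misses $\theta_x$, with $\psi_0 = \mathrm{id}$ and $\psi_1(\theta_1) = \theta_*$; set $\phi_t(\theta, y) = (\psi_t(\theta), y)$ on $H_1$ and extend by the identity to $H$.

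For the main step, let $[g_s] \in \anch_{\{E_*, E_2, \ldots, E_g\}}$ be represented by an ambient isotopy $g_s$ of $H$ with $g_s(x_0) = x_0$ and $g_s(I_0)$ disjoint from $E_* \cup E_2 \cup \ldots \cup E_g$. Define $h_s = \phi_1^{-1} \circ g_s \circ \phi_1$. Using $\phi_1(I_0) = I_0$, $\phi_1(E_1) = E_*$, and $\phi_1(E_i) = E_i$ for $i \geq 2$, one checks that $h_s(x_0) = x_0$ and that $h_s(I_0)$ is disjoint from $E_1 \cup E_2 \cup \ldots \cup E_g$, so $[h_s] \in \anch_{\{E_1, E_2, \ldots, E_g\}}$. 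The two-parameter family $(r, s) \mapsto \phi_r^{-1}(g_s(I_0))$, which at each $r$ is a loop in $\Unk(I, H)$ based at $\phi_r^{-1}(I_0) = I_0$, interpolates between $s \mapsto g_s(I_0)$ at $r = 0$ and $s \mapsto h_s(I_0)$ at $r = 1$, so $[g_s] = [h_s]$ in $\pi_1(\Unk(I, H))$. Hence $\anch_{\{E_*, E_2, \ldots, E_g\}} \subseteq \anch_{\{E_1, E_2, \ldots, E_g\}}$, and by the symmetric argument the two anchored subgroups are equal.

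It remains to compare the freewheeling subgroups. Since $E_1, E_*, E_2, \ldots, E_g$ are all disjoint from $I_0$, any parallelism disk $E_0$ for $I_0$ chosen inside a sufficiently thin tubular neighborhood of $I_0$ is disjoint from all of them. With this $E_0$, the subgroup $\free_{E_0}$ is defined identically whether it is viewed inside $\AF_{\{E_1, E_2, \ldots, E_g\}}$ or $\AF_{\{E_*, E_2, \ldots, E_g\}}$, since $\free_{E_0}$ depends only on $E_0$. Combining with the equality of the anchored subgroups, and invoking Lemma \ref{lemma:drop0} to replace this specific $E_0$ by any other valid choice on either side, we conclude $\AF_{\{E_1, E_2, \ldots, E_g\}} = \AF_{\{E_*, E_2, \ldots, E_g\}}$. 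The principal obstacle in this plan is the construction of $\phi_t$, but once one recognizes the product structure of $H_1$ cut along $E_1 \cup E_*$ and the fact that $I_0$ lives on only one side, the required $\phi_t$ is essentially an ``angular shear'' supported in the cylinder disjoint from $I_0$.
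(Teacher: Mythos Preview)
Your argument has a genuine gap in the construction of $\phi_t$.  When you cut $H$ along $E_2,\ldots,E_g$ to obtain the solid torus $H_1$, the boundary $\bdd H_1$ carries $2(g-1)$ ``scar'' disks coming from the two sides of each $E_i$.  Your angular shear $\phi_t(\theta,y)=(\psi_t(\theta),y)$ is supported in the product region $R$ between $E_1$ and $E_*$ on the side away from $I_0$.  For this map on $H_1$ to glue up to a diffeomorphism of $H$ (let alone one that fixes each $E_i$, $i\geq 2$, setwise), it must match on paired scars; equivalently, if you regard $H_1$ as $H$ minus an open neighborhood of $E_2\cup\cdots\cup E_g$, then $\phi_t$ must be the identity near those scars.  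But nothing forces the scars to lie in $R'$.  In fact they often do not: take $g=2$ and let $E_*$ be the handle-slide of $E_1$ over $E_2$.  On the $4$--holed sphere $\bdd H\setminus(\bdd E_1\cup\bdd E_2)$ the curve $\bdd E_*$ must separate one copy of $\bdd E_1$ together with one copy of $\bdd E_2$ from the other pair (otherwise $\bdd E_*$ would become separating in $\bdd H\setminus\bdd E_1$ or $\bdd H\setminus\bdd E_2$, contradicting that $\{E_1,E_*\}$ and $\{E_*,E_2\}$ are both complete).  Hence in $H_1$ one scar of $E_2$ lies in $R$ and the other in $R'$, and your shear moves exactly one of them.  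So $\phi_t$ does not exist as described.

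More fundamentally, you are attempting to prove the stronger statement $\anch_{\{E_1,\ldots,E_g\}}=\anch_{\{E_*,E_2,\ldots,E_g\}}$, which is more than the lemma asserts.  The paper does not claim this; it shows only that each anchored generator $\mathfrak{a}_i$ (or $\mathfrak{a}_i'$) of $\anch_{\{E_1,\ldots,E_g\}}$ lies in $\AF_{\{E_*,E_2,\ldots,E_g\}}$, and to do so it must sometimes conjugate by a genuine freewheeling element $f\in\free_{E_0}$ (the one that pushes $E_0$ once through $E_1$ and then through $E_*$).  The need for this $f$ is exactly the obstruction you ran into: when a generating loop $\gamma_i$ is separated from $I_0$ by $E_*$, one cannot realize $\mathfrak{a}_i$ while avoiding $E_*$ without first moving $I_0$ across $E_*$, and that move is freewheeling, not anchored.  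So the fix is not to repair $\phi_t$ but to abandon the goal of equating the two anchored subgroups and instead argue, as the paper does, at the level of the larger groups $\AF$.
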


\begin{proof}  Since all $g+1$ meridian disks $\{E_*, E_1, E_2, ..., E_g\}$ are mutually disjoint and all are disjoint from $I_0$, both $\AF_{\{E_1, E_2, ..., E_g\}}$ and $\AF_{\{E_*, E_2, ..., E_g\}}$ can be defined using a parallelism disk $E_0$ that is disjoint from all of the disks $\{E_*, E_1, E_2, ..., E_g\}$.  It follows that $\free_{E_0}$ is a subgroup of both $\AF_{\{E_1, E_2, ..., E_g\}}$ and $\AF_{\{E_*, E_2, ..., E_g\}}$.  Hence it suffices to show that $\anch_{\{E_*, E_2, ..., E_g\}} \subset \AF_{\{E_1, E_2, ..., E_g\}}$ and $\anch_{\{E_1, E_2, ..., E_g\}} \subset \AF_{\{E_*, E_2, ..., E_g\}}$.  We will prove the latter; the former follows by a symmetric argument.

 \begin{figure}[ht!]
 \labellist
\small\hair 2pt
\pinlabel $E_*$ at 145 150
\pinlabel $E_0$ at 198 175
\pinlabel ${E_*}'$ at 140 175
\pinlabel $c$ at 73 167
\pinlabel $E_1$ at 98 50
\pinlabel $E_i$ at 288 50
 \endlabellist
    \centering
    \includegraphics[scale=0.8]{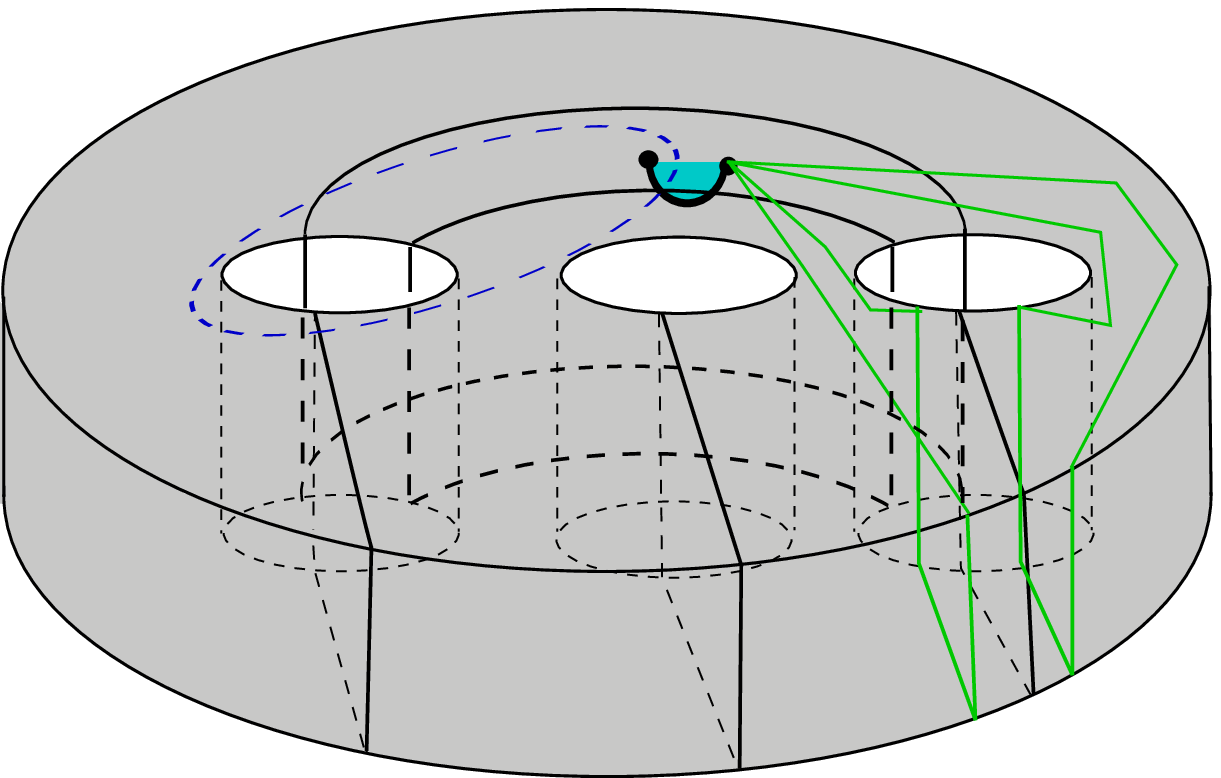}
\caption{} \label{fig:drop1}
    \end{figure}

Extend a regular neighborhood of  $\cup_{i=2}^g E_i$ to a regular neighborhood $Y$ of $\cup_{i=1}^g E_i$ and $Y^*$ of  $E_* \cup \cup_{i=2}^g E_i$.  The disk $E_*$ is necessarily separating in the ball $B^3 = H - Y$ and, since $H - Y^*$ is also a ball, it follows that the two sides of $E_1$ in $\bdd B^3$ lie in different components of $B^3 - E_*$.  Put another way, there is a simple closed curve $c$ in $\bdd H$ which is disjoint from $\{E_2, E_3, ..., E_g\}$ but intersects each of $E_1$ and $E_*$ in a single point.  Let $f \in \free_{E_0}$ be the element represented by isotoping $E_0$ around the circle $c$ in the direction so that it first passes through $E_1$ and then through $E_*$.  

The image of $E_*$, after the isotopy $f$ is extended to $H$, is a disk ${E_*}'$ that is isotopic to $E_*$ in $B^3$ but not in $B^3 - I_0$.  The isotopy need not disturb the disks $\{E_2, E_3, ..., E_g\}$.  Put another way, there is a collar between $E_*$ and ${E_*}'$ in $B^3$, a collar that contains both the trivial arc $I_0$ and the parallelism disk $E_0$ but is disjoint from $\{E_2, E_3, ..., E_g\}$.  See Figure \ref{fig:drop1}.  As before, let $P$ denote the planar surface $\bdd H - Y$, that is the planar surface obtained from $\bdd H$ by deleting a neighborhood of the meridian disks $\{E_1, E_2, ..., E_g\}$.  Modeling the concrete description of generators of $\anch \subset B_2(P)$ given via Figure \ref{fig:AgenP}, the definition of $\anch_{\{E_1, ..., E_g\}}$ begins with a collection of loops $\gamma_i, \gamma_i' \subset P, 1 \leq i \leq g$ so that all the loops are mutually disjoint, except in their common end points at $x_1$; for each $i$, one of $\gamma_i$ and $\gamma_i'$ is parallel in $P$ to each of the two copies of $\bdd E_i$ in $\bdd P$; each loop is disjoint from $\bdd E_0$ except at $x_1$; and (what is new) each loop intersects only one end of the collar that lies between $\bdd E_*$ and $\bdd {E_*}'$.  Then $2g$ generators $\mathfrak{a}_i, \mathfrak{a}_i', 1 \leq i \leq g$ of $\anch_{\{E_1, ..., E_g\}}$ are represented by isotopies obtained by sliding the endpoint $x_1$ of $I_0$ around the loops $\gamma_i$ and $\gamma_i'$ respectively. 

If $\gamma_i$ (resp $\gamma_i'$) is one of the loops disjoint from $E_*$, then $\mathfrak{a}_i$ (resp $\mathfrak{a}_i'$) also lies in $\anch_{\{E_*, ..., E_g\}}$.  If, on the other hand, $\gamma_i$ (resp $\gamma_i'$) is one of the loops that is disjoint from ${E_*}'$, then $f \mathfrak{a}_i f^{-1}$ (resp $f \mathfrak{a}_i' f^{-1}$) is represented  by an isotopy of $I_0$ that is disjoint from $E_*$.  Moreover, since an isotopy of $I_0$ representing $f$ doesn't disturb the disks $\{E_2, E_3, ..., E_g\}$, the isotopy of $I_0$ representing $f \mathfrak{a}_i f^{-1}$ (resp $f \mathfrak{a}_i' f^{-1}$) is disjoint from these disks as well.  That is, each such $f \mathfrak{a}_i f^{-1}$ (resp $f \mathfrak{a}_i' f^{-1}$)  lies in $\anch_{\{E_*, ..., E_g\}}$.  Hence in all cases, $\mathfrak{a}_i$ (resp $\mathfrak{a}_i'$) lies in $\AF_{\{E_*, E_2, ..., E_g\}}$, so $\anch_{\{E_1, ..., E_g\}} \subset \AF_{\{E_*, E_2, ..., E_g\}}$.
\end{proof}

It is well-known that in a genus $g$ handlebody any two complete collections of $g$ meridian disks can be connected by a sequence of complete collections of meridian disks so that at each step in the sequence a single meridian disk is replaced with a different and disjoint one. See, for example, \cite[Theorem 1]{Wa}.   It follows then from Lemma \ref{lemma:drop1} that the subgroup $\AF_{\{E_1, E_2, ..., E_g\}}$ is independent of the specific collection of meridian disks, so we can simply denote it $\AF$.

\begin{thm} The inclusion $\AF \subset \pi_1(\Unk(I, H))$ is an equality. \label{thm:main2}
\end{thm}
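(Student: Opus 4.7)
Let $\omega \in \pi_1(\Unk(I,H))$ be represented by an ambient isotopy $F_t$ of $H$ (with $F_1(I_0) = I_0$ setwise), and set $E_0' = F_1(E_0)$, a second parallelism disk for $I_0$. By Lemma \ref{lemma:diskback}, it suffices to produce an $\mathfrak{a} \in \AF$ whose representative ambient isotopy carries $E_0$ to a disk isotopic to $E_0'$ rel $I_0$; for then $\mathfrak{a}^{-1}\omega$ returns $E_0$ to itself up to such isotopy and therefore lies in $\free_{E_0} \subset \AF$.

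To construct $\mathfrak{a}$, first isotope $E_0'$ rel $\partial E_0'$ to meet $\cup_{i=1}^g E_i$ transversely and in arcs only (closed curves of intersection are removed by the usual innermost-disk argument inside $\cup_i E_i$). Then induct on $n := |E_0' \cap \cup_i E_i|$. In the base case $n=0$, both $E_0$ and $E_0'$ lie in $H \setminus \cup_i E_i$, and the proof of Lemma \ref{lemma:drop0} itself --- with its inner induction on $|E_0 \cap E_0'|$, whose ground case rotates an end of $I_0$ around the union disk $E_\cup$ --- explicitly exhibits an element $\mathfrak{a} \in \anch_{\{E_1,\ldots,E_g\}} \subset \AF$ whose representative carries $E_0$ to $E_0'$.

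For the inductive step $n > 0$, choose an outermost arc $\beta$ of $E_0' \cap \cup_i E_i$ in $E_0'$ whose cutoff subdisk $E^{**} \subset E_0'$ is disjoint from $I_0$; such a $\beta$ exists because every arc of $E_0' \cap \cup_i E_i$ has both endpoints on $\alpha_0' = \partial E_0' \setminus I_0$, so at least one outermost cutoff subdisk lies on the side of $E_0'$ away from $I_0$. Say $\beta \subset E_j$; attaching $E^{**}$ to one of the two components of $E_j \setminus \beta$ produces a new candidate meridian disk $E_j^{\mathrm{new}}$. A standard disk-swap argument shows that, for some choice of side, $E_j^{\mathrm{new}}$ is an essential meridian disk disjoint from $I_0$ and from $\{E_i : i \neq j\}$, with $|E_0' \cap E_j^{\mathrm{new}}| < |E_0' \cap E_j|$. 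By Lemma \ref{lemma:drop1} the replacement system $\{E_1,\ldots,E_{j-1},E_j^{\mathrm{new}},E_{j+1},\ldots,E_g\}$ computes the same $\AF$, and the inductive hypothesis applied to this system yields the desired $\mathfrak{a}$.

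The main obstacle I anticipate is the technical verification that the surgery in the inductive step really produces a meridian disk satisfying all of Lemma \ref{lemma:drop1}'s hypotheses. The bookkeeping is delicate because $\partial E_0'$ straddles $I_0$ and $\partial H$, so care is needed both to exhibit an outermost $\beta$ whose cutoff avoids $I_0$ and to ensure that $E_j^{\mathrm{new}}$ remains essential after the swap --- perhaps requiring a further innermost-disk reduction of intersections between $E_0'$ and $E_j^{\mathrm{new}}$ if both sides of $\beta$ initially give inessential replacements.
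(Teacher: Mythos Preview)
Your route is different from the paper's and nearly works, but the induction has a gap.  The paper inducts on $|E_0 \cap E_0'|$: in the base case it \emph{chooses} a meridian system disjoint from the union disk $E_{\cup} = E_0 \cup_{I_0} E_0'$, and in the inductive step it replaces the parallelism disk $E_0$ by an intermediate $F_0$ built from an outermost arc of $E_0 \cap E_0'$.  You instead induct on $|E_0' \cap \bigcup_i E_i|$ and in the inductive step replace a meridian disk, leaning on Lemma~\ref{lemma:drop1}.  Both strategies ultimately rest on Lemmas~\ref{lemma:drop0} and~\ref{lemma:drop1}, so neither is more elementary; yours has the mild advantage of making the role of Lemma~\ref{lemma:drop1} more transparent.

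The gap: in your inductive step you pass to the new system $\{E_1,\dots,E_j^{\mathrm{new}},\dots,E_g\}$ and invoke the inductive hypothesis ``applied to this system,'' but your base case requires that \emph{both} $E_0$ and $E_0'$ lie in $H - \bigcup_i E_i$.  The disk $E_j^{\mathrm{new}}$ contains $E^{**} \subset E_0'$, which may well meet the fixed $E_0$, so after repeated swaps you can arrive at $|E_0' \cap \text{(final system)}| = 0$ without $E_0$ being disjoint from that system, and then Lemma~\ref{lemma:drop0}'s construction no longer applies.  The fix is to let the base parallelism disk float: at each swap choose a small $\tilde E_0$ disjoint from both old and new systems, use Lemma~\ref{lemma:drop0} (with the old system) to get an element of $\AF$ carrying $E_0$ to $\tilde E_0$, and continue with $\tilde E_0$.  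Your stated ``main obstacle'' is real but easier than you suggest: in the solid torus $H - \eta(\bigcup_{i\neq j} E_i)$ the two surgered disks sum in $H_2(T,\partial T)$ to the class of $E_j$, so at least one is essential; both sides cannot simultaneously give inessential replacements.
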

\begin{proof}  Begin with a parallelism disk $E_0$ for $I_0$, with $\aaa_0$ the arc $E_0 \cap \bdd H$ connecting the endpoints $x_0$ and $x_1$.  Suppose $\ooo \in \pi_1(\Unk(I, H))$ is represented by a proper isotopy $f_t: I \to H$ extending to the ambient isotopy $f_t: H \to H$.  Adjust the end of the ambient isotopy so that $E_0$ and $f_1(E_0)$ are in normal position. Let $\aaa_{\ooo} \subset \bdd H$ denote the image $f_1(\aaa_0)$, an arc in $\bdd H$ that also connects $x_0$ and $x_1$.  The proof is by induction on $|\aaa_0 \cap \aaa_{\ooo}|$, the number of points in which the interiors of $(\aaa_0)$ and $ \aaa_{\ooo}$ intersect.  The number is always even, namely twice the number $|E_0 \cap f_1(E_0)|$ of arcs of intersection of the disk interiors.  

If $|\aaa_0 \cap \aaa_{\ooo}| = 0$, so $\aaa_0$ and $\aaa_{\ooo}$ intersect only in their endpoints at $x_0$ and $x_1$, then the union of $\aaa_0$ and $\aaa_{\ooo}$ is a simple closed curve in $\bdd H$ that bounds a (possibly inessential) disk $E_{\cup}$.  The disk $E_{\cup}$ properly contains $I_0$ and is properly contained in $H$.  Choose a complete collection of meridian disks $\{E_1, ..., E_g\}$ for $H$ that is disjoint from $E_{\cup}$.  Then an isotopy of one end of $I_0$ completely around $\bdd E_{\cup}$ (pushed slightly aside) represents an element $\mathfrak{b}$ of $\anch_{\{E_1, ..., E_g\}}$ and takes $\aaa_{\ooo}$ to  $\aaa_0$. See the earlier Figure \ref{fig:Ecup}.   It follows from Lemma \ref{lemma:diskback} that $\ooo \mathfrak{b} \in \free_{E_0}$ so $\ooo \in \AF$.

For the inductive step, suppose that any element in $\pi_1(\Unk(I, H))$ whose corresponding isotopy carries some $I_0$-parallel arc $\aaa$ to an arc that intersects $\aaa$ in $k$ or fewer points is known to lie in $\AF$. Suppose also that $|\aaa_0 \cap \aaa_{\ooo}| = k + 2$.    Let $E^*$ be a disk in $f_1(E_0) - E_0$ cut off by an outermost arc $\bbb$ of $E_0 \cap f_1(E_0)$ in $f_1(E_0)$.  Then attaching $E^*$ along $\bbb$ to the component of $E_0 - \bbb$ that contains $I_0$ gives a parallelism disk $F_0$ that is disjoint from $E_0$ and intersects $\aaa_{\ooo}$ in $\leq k$ points.  The union of $F_0$ and $E_0$ along $I_0$ is a properly embedded disk $E_{\cup}$ and, as usual, there is an isotopy representing an element $\eta$ of $\AF$ that carries $F_0$ to $E_0$.  Now apply the inductive assumption to the product $\eta \ooo$:  The isotopy corresponding to $\eta \ooo$ carries the arc $F_0 \cap \bdd H$ to $\aaa_{\ooo}.$   It follows by inductive assumption then that $\eta \ooo \in \AF$.  Hence also $\ooo \in \AF$, completing the inductive step. 
\end{proof}

Theorem \ref{thm:main1} is then an obvious corollary, and provides an explicit set of generators for $\pi_1(\Unk(I, H)).$

\section{Connection to width}

Suppose $f_t: I \to H$ is a proper isotopy from $I_0$ back to itself, representing an element $\ooo \in \pi_1(\Unk(I, H))$.  Put $f_t$ in general position with respect to the collection $\Delta$ of meridian disks $\{ E_1, ... E_g \}$ so $f_t$ is transverse to $\Delta$ (in particular, $f(\bdd I) \cap \Delta = \emptyset$) at all but a finite number $0 <  t_0 \leq t_1 \leq ... \leq t_n < 1$ of values of $t$.  Let $c_i, 1 \leq i \leq n$ be any value so that $t_{i - 1} \leq c_i \leq t_i$ and define $w_i = |f_{c_i}^{-1}(\Delta)| =  |f_{c_i}(I) \cap \Delta|$ to be the width of $I$ at $c_i$.  The values $w_i, 1 \leq i \leq n$ are all independent of the choice of the points $c_i \in (t_{i - 1},  t_i)$ since the value of $|f_{t}(I) \cap \Delta|$ can only change at times when $f_t$ is not transverse to $\Delta$.  

\begin{defin}  The width $w(f_t)$ of the isotopy $f_t$ is $max_i \{ w_i \}$.  The width $w(\ooo)$ of $\ooo \in \pi_1(\Unk(I, H))$ is the minimum value of $w(f_t)$ for all isotopies $f_t$ that represent $\ooo$.
\end{defin}

\begin{cor} \label{cor:product} For any $\ooo_1, \ooo_2 \in \pi_1(\Unk(I,H))$, the width of the product $w(\ooo_1\ooo_2) \leq max\{ w(\ooo_1), w(\ooo_2) \}.$
\end{cor}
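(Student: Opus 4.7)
The plan is to realize a representative of $\omega_1\omega_2$ by concatenating optimal representatives of $\omega_1$ and $\omega_2$, and to observe that the width of a concatenation is the maximum of the widths of its pieces.

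First, note that $w(\omega)$ takes values in the non-negative integers, so for each $\omega$ the infimum defining $w(\omega)$ is attained: there is an isotopy $f_t: I \to H$ from $I_0$ to $I_0$ representing $\omega$ with $w(f_t) = w(\omega)$. Choose such representatives $f_t$ for $\omega_1$ and $g_t$ for $\omega_2$. Since the basepoint $I_0 = x \times I$ lies to the left of the disks $D_i$, we have $I_0 \cap \Delta = \emptyset$, so $f_t$ and $g_t$ are transverse to $\Delta$ at $t = 0$ and $t = 1$.

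Next, form the concatenation
\[
h_t = \begin{cases} f_{2t} & 0 \leq t \leq \tfrac{1}{2}, \\ g_{2t-1} & \tfrac{1}{2} \leq t \leq 1, \end{cases}
\]
which is a proper isotopy from $I_0$ to $I_0$ representing $\omega_1\omega_2$. Because $h_{1/2} = I_0$ is disjoint from $\Delta$, the map $h_t$ is transverse to $\Delta$ at $t = \tfrac{1}{2}$; the finite collection of non-transverse times for $h_t$ is precisely the rescaled union of the non-transverse times of $f_t$ and $g_t$. Between any two consecutive non-transverse times of $h_t$, the value of $|h_c^{-1}(\Delta)|$ is one of the widths $w_i$ occurring for $f_t$ or for $g_t$. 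Therefore $w(h_t) = \max\{w(f_t), w(g_t)\}$.

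It follows that
\[
w(\omega_1\omega_2) \leq w(h_t) = \max\{w(f_t), w(g_t)\} = \max\{w(\omega_1), w(\omega_2)\},
\]
which is the desired inequality. The only subtlety, and the point one must not gloss over, is checking that transversality is not lost at the junction $t = \tfrac{1}{2}$; this is handled by the observation that $I_0$ itself is disjoint from $\Delta$, so no perturbation at the seam is needed.
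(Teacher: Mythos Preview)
Your argument is correct and is exactly the obvious concatenation argument the paper leaves implicit (the corollary is stated without proof). The one point you wisely flag—that transversality is preserved at the seam because $I_0\cap\Delta=\emptyset$—is the only thing to check, and you handle it; the width on the interval containing $t=\tfrac12$ is $0$, so it does not affect the maximum.
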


It follows that, for any $n \geq 0$, the set of elements of $\pi_1(\Unk(I,H))$ of width no greater than $n$ constitutes a subgroup of $\pi_1(\Unk(I,H))$.  For example, if $n = 0$ the subgroup is precisely the image of the inclusion-induced homomorphism $\pi_1(\Unk_P(I, B^3)) \to \pi_1(\Unk(I, H))$ defined at the beginning of Section \ref{sect:unkinH}, a subgroup that includes the anchored subgroup $\mathfrak{A}_{\Delta}$.  It is easy to see that the width of any element in $ \mathfrak{F}_{E_0}$ is at most $1$, so it follows from Corollary \ref{cor:product} and Theorem \ref{thm:main1}, that the width of any element in $\pi_1(\Unk(I, H)) =  \mathfrak{AF}_{\{E_0, E_1, ..., E_g\}}$ is at most $1$.  

The fact that every element in $\pi_1(\Unk(I, H))$ is at most width $1$ suggests an alternate path to a proof of Theorem \ref{thm:main1}, a path that would avoid the technical difficulties of Lemmas \ref{lemma:drop0} and \ref{lemma:drop1}: prove directly that any element of width $1$ is in $\mathfrak{AF}_{\{E_0, E_1, ..., E_g\}}$ and prove directly  that any element in $\pi_1(\Unk(I, H))$ has width at most $1$ (say by thinning a given isotopy as much as possible) .  We do so below.  Neither argument requires a change in the meridian disks ${\{E_1, ..., E_g\}}$. The first argument, that any element of width $1$ is in $\mathfrak{AF}_{\{E_0, E_1, ..., E_g\}}$, is the sort of argument that might be extended to isotopies of unknotted graphs in $H$, not just isotopies of the single unknotted arc $I$, just as other thin position arguments have been extended to graphs (see \cite{ST1}, \cite{ST2}).  The second argument, which shows that the thinnest representation of any element in $\pi_1(\Unk(I, H))$ is at most width $1$, seems difficult to generalize to isotopies of an arbitrary unknotted graph, because the argument doesn't directly thin a given isotopy, but rather makes use of Lemma \ref{lemma:diskback} in a way that may be limited to isotopies of a single arc.  

\begin{prop}  If an element $\ooo \in \pi_1(\Unk(I, H))$ has $w(\ooo) = 1$ then $\ooo \in \mathfrak{AF}_{\{E_0, E_1, ..., E_g\}}$
\end{prop}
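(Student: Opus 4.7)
The plan is to decompose $\omega$ into a product of ``elementary'' loops, each containing a single pair of critical events, and to argue each such loop lies in $\AF$.

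Represent $\omega$ by a proper ambient isotopy $f_t : H \to H$ with $w(f_t) = 1$, in general position so that the critical times $0 < t_1 < \cdots < t_n < 1$ are isolated and at each $t_i$ exactly one endpoint of $f_t(I)$ crosses exactly one $\partial E_{j_i} \subset \partial H$ transversely. Since $w(f_0) = w(f_1) = 0$ and the width changes only at critical times, the widths of successive generic intervals alternate $0, 1, 0, 1, \ldots, 0$, so $n$ is even; moreover, throughout each width-$1$ interval $[t_{2i-1}, t_{2i}]$ the single intersection point of $f_t(I)$ with $\Delta$ remains on one disk $E_{j_{2i-1}}$ by continuity. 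Pick generic times $s_1 < s_2 < \cdots < s_{n/2+1}$ with $s_i$ in the width-$0$ interval just before the $i$-th pair and $s_{n/2+1}$ after the last pair; each $I_i := f_{s_i}(I)$ lies in $B^3 = H \setminus \eta(\Delta)$. Choose width-$0$ connecting isotopies $\mu_i$ from $I_0$ to $I_i$ inside the path-connected space $\Unk_P(I, B^3)$. Then in $\pi_1(\Unk(I, H))$ we have $\omega = \omega_1 \omega_2 \cdots \omega_{n/2}$, where $\omega_i := \mu_i \cdot f_{[s_i, s_{i+1}]} \cdot \mu_{i+1}^{-1}$ is a loop at $I_0$ of width at most $1$ whose representative has exactly the two critical times $t_{2i-1}, t_{2i}$. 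It suffices to show each $\omega_i \in \AF$.

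Fix $i$ and analyze $\omega_i$. The width-$0$ flanking portions $\mu_i, \mu_{i+1}^{-1}$ lie in the image of $\pi_1(\Unk_P(I, B^3)) \cong B_2(P) \to \pi_1(\Unk(I, H))$, and the explicit generators of $B_2(P)$ described earlier---the rotor $\rho_0$, the freewheeling loops $\rho_j$, and the anchored loops $\mathfrak{a}_j$---all map into $\free_{E_0}$ or $\anch_{\{E_1, \ldots, E_g\}}$, hence into $\AF$. The middle portion $f_{[s_i, s_{i+1}]}$ effects a single push-through-and-back across $\partial E_{j_{2i-1}}$. Combined with the width-$0$ flanking portions, $\omega_i$ is represented by an isotopy in which the endpoints of $I$ trace closed paths in $\partial H$ with exactly two transverse crossings of $\partial E_{j_{2i-1}}$ in total. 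Such a motion can be recognized as a product of a freewheeling generator of $\free_{E_0}$ corresponding to one of the standard generators of $\pi_1(\partial H)$ that hits $\partial E_{j_{2i-1}}$ nontrivially, together with anchored corrections lying in $\anch_{\{E_1, \ldots, E_g\}}$. Hence $\omega_i \in \AF$, and so $\omega = \prod \omega_i \in \AF$.

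The main obstacle is the recognition of the push-through-and-back portion as an explicit element of $\AF$. The two critical events in $\omega_i$ may involve the same endpoint or different endpoints, leading to different combined endpoint trajectories in $\partial H$---a single loop crossing $\partial E_{j_{2i-1}}$ twice in the first case, or two loops each crossing once in the second. In either case, one must produce a factorization inside $\AF$ using the $2g+1$ known generators of $\free_{E_0}$ (possibly after a rotor conjugation to swap endpoints) and check that auxiliary $P$-motions used to close up trajectories contribute only elements of $\anch$. The bookkeeping resembles, but does not require, the intersection-counting argument of Theorem \ref{thm:main2}.
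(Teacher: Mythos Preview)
Your reduction of $\omega$ to a product of elementary loops $\omega_i$, each carrying exactly two critical events on a single meridian disk, is correct and is the same opening move the paper makes.  The substance of the proposition, however, is precisely the step you label ``the main obstacle,'' and your treatment of it is a genuine gap rather than deferred bookkeeping.

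You try to pin down $\omega_i$ by the closed paths its endpoints trace on $\partial H$ and then match those paths against a product of freewheeling and anchored generators.  But an element of $\pi_1(\Unk(I,H))$ is not determined by its image in $\pi_1(C_2(\partial H))$: even after you exhibit some $\phi \in \AF$ whose endpoint motion agrees with that of $\omega_i$, the difference $\omega_i\phi^{-1}$ is an isotopy of $I_0$ that fixes both endpoints and about which you know nothing further---showing \emph{it} lies in $\AF$ is essentially the original problem over again.  No endpoint combinatorics closes this loop.  (A smaller issue: you assert that every critical event is an endpoint crossing, but in general position an interior tangency of $f_t(I)$ with $\Delta$ is also allowed; you must rule it out by observing that a tangency changes the width by $2$, which is impossible here.)

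The paper resolves the elementary loop geometrically, not combinatorially.  It first reduces the same-endpoint case to two instances of the different-endpoint case by finding a parallelism disk for $f_{t'}(I)$ meeting $E_1$ in a single arc and pushing the far half across.  In the different-endpoint case it then \emph{constructs} a parallelism disk $D$ whose images at both critical moments are disjoint from $\Delta$: tiny half-disks on the two sides of $E_1$, taken just after $t_0$ and just before $t_1$, are carried by the ambient isotopy to a common intermediate time and glued along a shared arc in $E_1$.  With such a $D$ available, one replaces the width-$0$ portions of the isotopy before $t_0$ and after $t_1$ by motions inside $H-\Delta$ that carry $E_0$ to $D$ and back; the modified loop now carries $E_0$ to itself and hence lies in $\free_{E_0}$, while the modifications themselves are width-$0$ loops, already in $\AF$.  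This parallelism-disk construction is the idea missing from your proposal.
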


\begin{proof} Suppose $w(\ooo) = 1$ and $f_t: I \to H$ is an isotopy realizing this width.  Let $0 <  t_0 \leq t_1 \leq ... \leq t_n < 1$ be the critical points of the isotopy and, as defined above, let $w_i$ be the width of $I$ during the $i^{th}$ interval. Since each $w_i$ is either $0$ or $1$ and $w_{i} \neq w_{i-1}$ it follows that the value of $w_i$ alternates between $0$ and $1$.  Since $I_0 \cap \Delta = \emptyset$ the value of $w_1 = 1$.  During those intervals when the width is $0$, $f_t(I)$ is disjoint from $\Delta$, so the isotopy $f_t$ can be deformed, without altering the width, so that  at some time $t$ during each such interval, $f_t(I) = I_0$.  Thereby $\ooo$ can be viewed as the product of elements, for each of which there is an isotopy with just two critical points, $t_0, t_1$.   So we henceforth can assume that $n = 1$ and there are just two critical points during the isotopy.   If either point were critical because of a tangency between $f_{t_i}(I)$ and $\Delta$, then the value of $w_i$ would change by $2$ as the tangent point passed through $\Delta$, and this is impossible by the assumption in this case.  We conclude that $t_0$ and $t_1$ are critical because they mark the point at which $f_t$ moves a single end of $I$ through $\Delta$, say through the disk $E_1 \subset \Delta$.  It is possible that at $t_0$ and $t_1$ the same end of $I$ moves through $E_1$ (so $I$ does not pass completely through $E_1$), or possibly different ends of $I$ move through $E_1$ (when $I$ does pass completely through $E_1$).  We next show that an isotopy of the first type can be deformed to a sequence of two isotopies of the second type.  

To that end, suppose that for $t$ between $t_0$ and $t_1$, $f_t(I)$ intersects $\Delta$ in a single point, lying in the disk $E_1 \subset \Delta$, that at $t_0$ an end of $I$ first passes through $E_1$ and at $t_1$ the same end of $I$ is passed back through $E_1$, eliminating the intersection point with $\Delta$.   Pick a value $t'$ between $t_0$ and $t_1$; a standard innermost disk, outermost arc argument shows that there is a parallelism disk $D$ for $f_{t'}(I)$ that $E_1$ intersects in a single arc, so $D$ is divided into two subdisks.  One of these subdisks $D_-$ is incident to the end of $I$ that never passes through $E_1$.  Use the disk $D_-$ to deform $f_t$, first isotoping the subarc $f_{t'}(I) \cap D_-$ through $E_1$ and then isotoping it back to its original position.   Deforming $f_t$ further as in the previous paragraph, so that $f_t(I)$ briefly returns to $I_0$ without intersecting $\Delta$, the resulting isotopy still represents $\ooo$ but now can be viewed as the product of two elements, each still represented by an isotopy with just two critical points, but during which $I$ passes all the way through $E_1$.  Hence it suffices to henceforth to assume that at $t_0$ and $t_1$, different ends of $I$ pass through $E_1$ because during the isotopy $I$ passes completely through $E_1$.  

\bigskip

{\bf Special Case:} There is a parallelism disk $D$ for $I$ so that the interiors of both $f_{t_0}(D)$ and $f_{t_1}(D)$ are disjoint from the meridian disks $\Delta$.

In this case, replace the isotopy $f_t, 0 \leq t \leq t_0$ by an isotopy that carries $E_0$ to $f_{t_0}(D)$ in $H - \Delta$ and replace the isotopy $f_t, t_1 \leq t \leq 1$ by an isotopy that carries $f_{t_1}(D)$ to $E_0$  in $H - \Delta$.  The resulting isotopy carries $E_0$ back to itself, and therefore represents an element of $\free_{E_0}$.  It has been obtained by pre- and post-multiplying $\ooo$ by elements whose representing isotopies keep $I$ disjoint from $\Delta$ and therefore lie in $\AF_{E_0, ..., E_g}$.  It follows that $\ooo \in  \AF_{E_0, ..., E_g}$ as required.  

\bigskip

{\bf General case:}  

$f_{t_0}(I)$ intersects $E_1$ exactly in a single endpoint.  Since the isotopy is generic, at time $t_0 + \epsilon$,  there is a small disk $D_0$ on the side of $E_1$ opposite to $f_{t_0}(I)$ so that $\bdd D_0$ is the union of an arc in $E_1$, an arc in $\bdd H$ and the small  end-segment of $f_{t_0 + \epsilon}(I)$.  Similarly, at time $t_1 - \epsilon$ there is a small disk $D_1$ on the other side of $E_1$ with $\bdd D_1$ the union of an arc in $E_1$, an arc in $\bdd H$ and the small end-segment of $f_{t_1 - \epsilon}(I)$.  Pick a generic point $t_0 + \epsilon < t' < t_1 - \epsilon$.  For $t$ between $t_0 + \epsilon$ and $t_1 - \epsilon$, the arc $f_t(I)$ is always divided into two segments by $E_1$.  Apply the isotopy extension theorem to $D_0$ in the interval $[t_0 + \epsilon, t']$ and to $D_1$ in the interval $[t', t_1 - \epsilon]$ to get two disks at time $t'$, say $D_0'$ and $D_1'$, one on either side of $E_1$, each intersecting $E_1$ in an embedded arc with one end at the point $f_t(I') \cap E_1$ and the other end on $\bdd E_1$.  These two arcs in $E_1$ do not necessarily coincide, nor are the disks $D_0'$ and $D_1'$ necessarily disjoint, but $f_{t'}(I)$ is disjoint from the interiors of both of the disks.  It follows that $D_1'$ can be isotoped rel  $f_{t'}(I)$ so that its interior is disjoint from $D_0'$ and so that its boundary intersects $E_1$ in the same arc that $\bdd D_0'$ does.  This isotopy of $D_1'$ can be absorbed into the ambient extension to all of $H$ of the isotopy $t_t(I)$ near time $t'$, altering $D_1'$ so that the two arcs $\bdd D_0' \cap E_1$ and $\bdd D_1' \cap E_1$ do coincide and the interiors of $D_0'$ and $ D_1'$ are disjoint .  The union of $D_0'$ and $ D_1'$ along their common boundary arc of intersection with $E_1$ then provides a parallelism disk $D$ for which the special case above applies.   
\end{proof}

\begin{prop}  Any element $\ooo \in \pi_1(\Unk(I, H))$ has a representative isotopy which is of width at most $1$ with respect to the collection of meridian disks $\Delta$.  
\end{prop}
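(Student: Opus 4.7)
The plan is to use Lemma~\ref{lemma:diskback} to reduce the problem to a width bound on $\free_{E_0}$. First, every element of $\free_{E_0}$ has a representative of width at most $1$: the rotor $\rho_0$ has width $0$; each of the $2g$ handle-sweep generators of $\free_{E_0}$ corresponds to a loop in $\bdd H$ crossing $\bdd\Delta$ in a single component and can be performed so $I$ meets $\Delta$ in at most one point at every moment; Corollary~\ref{cor:product} then propagates the bound to all of $\free_{E_0}$. Thus, given $\ooo \in \pi_1(\Unk(I, H))$, it suffices to construct an auxiliary element $\ooo_0$ with $w(\ooo_0) \leq 1$ such that $\aaa_{\ooo_0}$ is isotopic rel endpoints in $\bdd H$ to $\aaa_{\ooo}$. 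Lemma~\ref{lemma:diskback} then gives $\ooo\ooo_0^{-1} \in \free_{E_0}$, hence of width at most $1$, and Corollary~\ref{cor:product} applied to $\ooo = (\ooo\ooo_0^{-1}) \cdot \ooo_0$ yields $w(\ooo) \leq 1$.

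To construct $\ooo_0$ I would induct on $n = |\aaa_{\ooo} \cap \bdd\Delta|$, with $\aaa_{\ooo}$ placed in general position. For $n = 0$, the arc $\aaa_{\ooo}$ lies in $P = \bdd H - \eta(\bdd\Delta)$, and a product of anchored generators $\mathfrak{a}_i$ (each of width $0$), pre-composed with a rotor $\rho_0$ if the endpoints of $\aaa_{\ooo}$ appear swapped relative to $\aaa_0$, supplies the required $\ooo_0$. For the inductive step, let $p \in \aaa_{\ooo} \cap \bdd E_j$ be the crossing nearest $x_0$ along $\aaa_{\ooo}$. Construct a width-$1$ element $\eta$ whose representing isotopy drags $x_1$ along the initial crossing-free segment of $\aaa_{\ooo}$ in $P$, pushes it once through $\bdd E_j$ at $p$, and then returns $x_1$ to its starting position along a compensating arc that re-crosses $\bdd E_j$; the compensating arc is chosen so that the overall loop traced by $x_1$ in $\bdd H$ lies in the kernel of $\pi_1(\bdd H) \to \pi_1(H)$, guaranteeing $\eta$ closes up as a bona fide loop in $\Unk(I, H)$. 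The parallel arc of $\eta^{-1}\ooo$ then has strictly fewer intersections with $\bdd\Delta$; the inductive hypothesis supplies a width-$\leq 1$ element realizing it, and prepending $\eta$ produces the required $\ooo_0$.

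The main obstacle is the precise construction of $\eta$: the loop traced by $x_1$ in $\bdd H$ must lie in the kernel of $\pi_1(\bdd H) \to \pi_1(H)$, and the isotopy must keep $I$ -- arranged as a straight tube from the fixed endpoint $x_0$ to the moving endpoint -- meeting $\Delta$ in at most a single point throughout. Both conditions can be handled by standard general position and isotopy extension arguments in a local model near $\bdd E_j$, with conjugates of $m_j$ by short words supplying the needed loops. Once $\eta$ is in hand, Corollary~\ref{cor:product} and Lemma~\ref{lemma:diskback} combine to give the stated width bound on $\ooo$, completing the proof without recourse to Lemmas~\ref{lemma:drop0} and~\ref{lemma:drop1}.
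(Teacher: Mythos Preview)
Your opening reduction via Lemma~\ref{lemma:diskback} is exactly how the paper begins as well: both arguments recognize that it suffices to exhibit, for each $\ooo$, \emph{some} width-$\leq 1$ isotopy whose terminal parallelism disk agrees (up to isotopy rel $I_0$) with $f_1(E_0)$, since the discrepancy then lies in $\free_{E_0}$. So the overall strategy is sound and matches the paper.

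The gap is in your inductive step. You describe $\eta$ as an anchored-style move of $x_1$ through $\bdd E_j$ and back, with the return path chosen so that the resulting loop of $x_1$ is null-homotopic in $H$, and then assert that $\aaa_{\eta^{-1}\ooo}$ has strictly fewer crossings with $\bdd\Delta$. Neither step is justified. First, you have only specified the motion of the endpoint $x_1$, not an isotopy of the arc $I_0$; null-homotopy of the $x_1$-loop in $H$ does not by itself produce a proper isotopy of $I_0$ back to itself, nor does it tell you how the arc is carried through $E_j$ so as to keep the width at~$1$. Second, and more seriously, for the natural conventions one has $\aaa_{\eta^{-1}\ooo} = g_1^{-1}(\aaa_{\ooo})$ where $g_1$ is the terminal diffeomorphism of an ambient extension of $\eta$. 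The count you must control is therefore $|\aaa_{\ooo} \cap g_1(\bdd\Delta)|$, and nothing in your description of $\eta$ constrains how $g_1$ moves the curves $\bdd\Delta$ relative to the fixed arc $\aaa_{\ooo}$. The sentence ``can be handled by standard general position and isotopy extension arguments'' is concealing precisely the content of the proposition.

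The paper's inductive step is organized quite differently and sidesteps this difficulty. Rather than manufacturing an auxiliary element $\eta$, it directly builds a representative isotopy for $\ooo$ (modulo $\free_{E_0}$) called a \emph{$D_1$-sweep}: first carry $E_0$ inside $H-\Delta$ to a tiny subdisk $N_p\subset D_1$ near a chosen point $p\in\bdd D_1\cap\bdd H$, then sweep $N_p$ across all of $D_1$. The induction is on the number of arcs in $D_1\cap\Delta$, and the decisive choice is to take $p$ adjacent to an arc $\bbb\subset D_1\cap\Delta$ that is outermost \emph{in $\Delta$} (not in $D_1$). One interrupts the sweep at the moment $N_p$ has filled the subdisk $D_p\subset D_1$ cut off by a push-off $\bbb'$ of $\bbb$; the outermost subdisk that $\bbb$ cuts from $\Delta$ supplies a width-$0$ isotopy $h$ carrying $\bbb'$ back to $I_p$. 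Inserting $h*\overline{h}$ then factors the sweep as a product of two loops in $\Unk(I,H)$ whose terminal disks each meet $\Delta$ in strictly fewer arcs, and Corollary~\ref{cor:product} plus induction finish. The outermost-in-$\Delta$ choice is exactly what makes the intersection count drop automatically, and it is the geometric idea your sketch is missing.
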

  
\begin{proof}  Let $D_0$ be any disk of parallelism for $I_0$ that is disjoint from $\Delta$, let $f_t$ be an isotopy of $I$ that represents $\ooo$, so in particular $f_1(I_0) = I_0$.  Extend $f_t$ to an ambient isotopy of $H$, and let $D_1 = f_1(D_0)$ be the final position of $D_0$ after the isotopy.  Call $D_1$ the {\em terminal disk} for the isotopy.  It follows from Lemma \ref{lemma:diskback} that, up to a product of elements of $\free_{D_0}$, each of which has width $1$ with respect to $\Delta$, $\ooo$ is represented by any isotopy of $I_0$ back to itself that has the same terminal disk $D_1$.  The proof will be by induction on $|D_1 \cap \Delta|$; we assume this has been minimized by isotopy, so in particular all components of intersection are arcs with end points on the arc $\aaa = \bdd D_1 \cap \bdd H$.  

Given the disk $D_1$, here is a useful isotopy, called a {\em $D_1$-sweep}, that takes $D_0$ to $D_1$. Pick any $p$ point on $\aaa$, and isotope $D_0$ in the complement of $\Delta$ so that it becomes a small regular neighborhood $N_p$ of $p$ in $D_1$. (Do not drag $D_1$ along.) This isotopy carries $I_0$ to an arc $I_p = \bdd N_p - \bdd H$ properly embedded in $D_1$.  Then stretch $N_p$ in $D_1$ until it fills all of $D_1$, so we view $N_p$ as sweeping across all of $D_1$.  The combination of the two isotopies carries $I_0$ back to itself and takes $D_0$ to $D_1$ so, up to a further product with width one isotopies, we can assume that this combined isotopy represents $\ooo$ (via Lemma \ref{lemma:diskback}).  

It will be useful to have a notation for this two stage process: let $g$ be the isotopy in the complement of $\Delta$ that takes $D_0$ to $N_p$ and let $s$ be the isotopy that sweeps $N_p$ across $D_1$.  We wish to determine the width of the sequence $g*s$ of the two isotopies.  Since the isotopy $g$ is disjoint from $\Delta$, much depends on the $D_1$-sweep $s$ of $N_p$ across $D_1$.  In particular, if the number of intersection arcs $|D_1 \cap \Delta| \leq 1$ it is obvious how to arrange the sweep so the width is at most $1$.  So henceforth we assume that $|D_1 \cap \Delta| \geq 2$.

 \begin{figure}[ht!]
 \labellist
\small\hair 2pt
\pinlabel $g$ at 50 170
\pinlabel $N_p$ at 185 25
\pinlabel $\bbb'$ at 160 50
\pinlabel $\bbb$ at 205 50
\pinlabel $s$ at 185 45
\pinlabel $D_0$ at 10 100
\pinlabel $D_1$ at 125 40
\pinlabel \mbox{$D_p \subset D_1$} at 235 90
 \endlabellist
    \centering
    \includegraphics[scale=0.9]{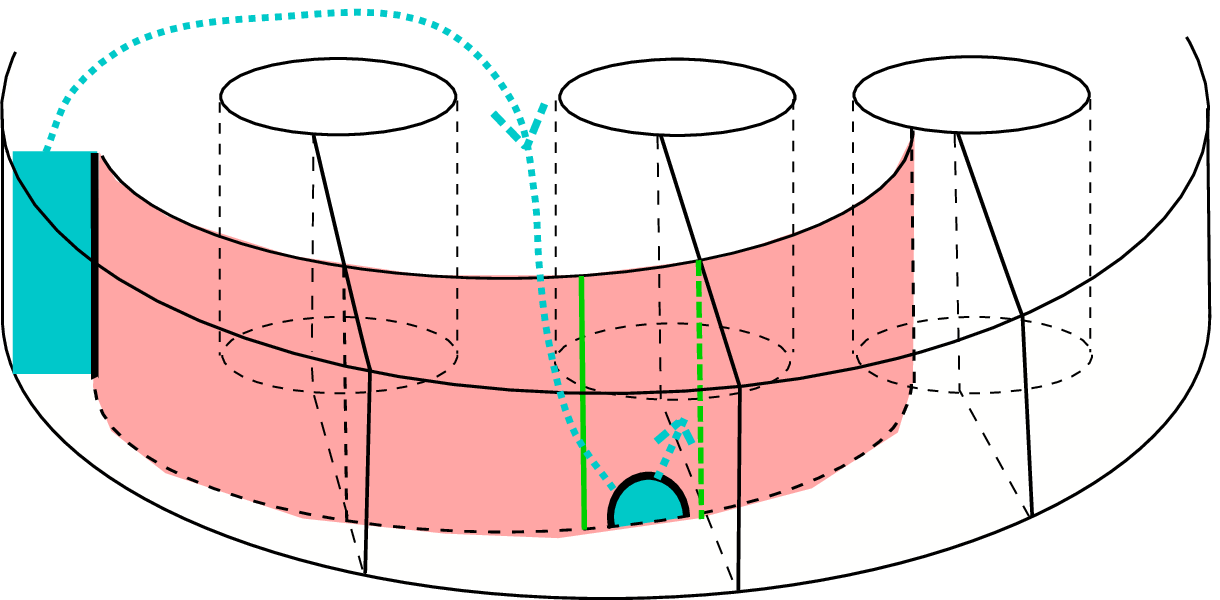}
\caption{} \label{fig:widthone}
    \end{figure}

Of all arcs in $D_1 \cap \Delta$, let $\beta$ be one that is outermost in $\Delta$.  
That is, the interior of one of the disks that $\bbb$ cuts off from $\Delta$ is disjoint from $D_1$.  
Pick the point $p$ for the $D_1$-sweep to be near an end-point of $\bbb$ in $\bdd D_1$, on a side of $\bbb$ that contains at least one other arc.  (Since $|D_1 \cap \Delta| \geq 2$ there is at least one other arc.)  Let $\bbb' \subset D_1$ be an arc in $D_1 - \Delta$ with an end at $p$ and which is parallel to $\bbb$ and let $D_p$ be the subdisk of $D_1$ cut off by $\bbb'$ that does not contain $I_0 \subset \bdd D_1$.  
See Figure \ref{fig:widthone}.  Now do the sweep $s$ in two stages:  first sweep $N_p$ across $D_p$ until it coincides with $D_p$, then complete the sweep across $D_1$.  See Figure \ref{fig:sweeptwo}
Denote the two stages of the sweep by $s = s_1 * s_2$.  The isotopy $s_1$ carries $I_p$ to the arc $\bbb'$; exploiting the fact that $\bbb$ is outermost in $\Delta$ there is an obvious isotopy $h$ (best imagined in Figure \ref{fig:widthone}) that carries $\bbb'$ back to $I_p$, an isotopy that is disjoint from $\Delta$ and from $D_1$.  Finally, deform the given isotopy $g*s = g*s_1*s_2$ whose width we seek, to the isotopy $g*s_1*h*\overline{g}*g*\overline{h}*s_2$, which can be written as the product $(g*s_1*h*\overline{g})*(g*\overline{h}*s_2)$, of two isotopies, each representing an element of $\pi_1(\Unk(I, H))$.  The first isotopy has terminal disk containing exactly the arcs of $D_p \cap \Delta$ and the second has terminal disk containing the other arcs $D_1 \cap \Delta$.  By construction of $\bbb'$ each set is non-empty so each terminal disk intersects $\Delta$ in fewer arcs than $D_1$ did.  By inductive hypothesis, each isotopy can be deformed to have width at most $1$ so, by Corollary \ref{cor:product}, $w(\ooo) \leq 1$.  \end{proof}

 \begin{figure}[ht!]
 \labellist
\small\hair 2pt
\pinlabel $I_0$ at 10 170
\pinlabel $N_p$ at 110 132
\pinlabel $\bbb'$ at 90 160
\pinlabel $\bbb$ at 125 160
\pinlabel $s_1$ at 135 40
\pinlabel $s_2$ at 75 65
\pinlabel \mbox{$D_p$} at 170 40
 \endlabellist
    \centering
    \includegraphics[scale=1.0]{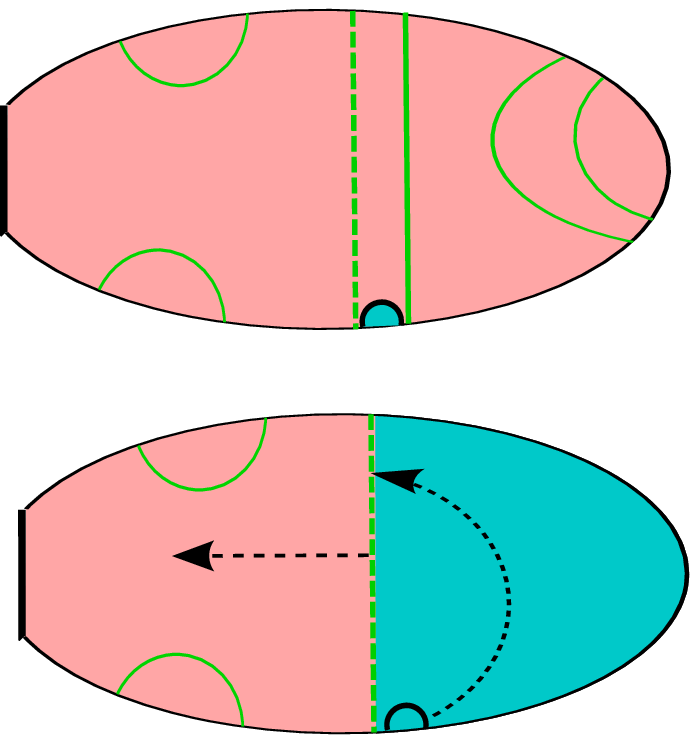}
\caption{} \label{fig:sweeptwo}
    \end{figure}

\section{Connection to the Goeritz group} \label{sect:toGoeritz}

Suppose $H$ is a genus $g \geq 1$ handlebody, and $\Sss$ is a genus $g+1$ Heegaard surface in $H$.  That is, $\Sss$ splits $H$ into a handlebody $H_1$ of genus $g + 1$ and a compression-body $H_2$.  $H_2$ is known to be isotopic to the regular neighborhood of the union of $\bdd H$ and an unknotted properly embedded arc $I_0 \subset H$ (see \cite[Lemma 2.7]{ST1}).  

Let $\Diff(H)$ be the group of diffeomorphisms of $H$ and $\Diff(H, \Sss) \subset \Diff(H)$ be the subgroup of diffeomorphisms that take the splitting surface $\Sss$ to itself.  Following \cite{JM},  define the Goeritz group $G(H, \Sss)$ of the Heegaard splitting to be the group consisting of those path components of $\Diff(H, \Sss)$ which lie in the trivial path component of $\Diff(H)$.  So, a non-trivial element of $G(H, \Sss)$ is represented by a diffeomorphism of the pair $(H, \Sss)$ that is isotopic to the identity as a diffeomorphism of $H$, but no isotopy to the identity preserves $\Sss$.  

\begin{thm} \label{thm:goeritz}  For $H$ a handlebody of genus $\geq 2$, 
$$G(H, \Sss) \cong\pi_1(\Unk(I, H)).$$ 

For $H$ a solid torus, there is an exact sequence 
$$1 \to \zed \to \pi_1(\Unk(I, H)) \to G(H, \Sss) \to 1.$$

In either case, the finite collection of generators of $\pi_1(\Unk(I, H))$ described above is a complete set of generators for $G(H, \Sss)$.
\end{thm}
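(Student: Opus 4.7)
The plan is to construct a natural surjective homomorphism
$\Phi \colon \pi_1(\Unk(I, H)) \to G(H, \Sss)$
and identify its kernel; the generator claim then follows immediately, since $\Phi$ carries the generating set for $\pi_1(\Unk(I, H))$ given by Theorem \ref{thm:main1} to a generating set of $G(H, \Sss)$. The construction of $\Phi$ exploits the characterization of $H_2$ as a regular neighborhood of $\bdd H \cup I_0$ from \cite[Lemma 2.7]{ST1}: given $\ooo \in \pi_1(\Unk(I, H))$ represented by an isotopy $f_t \colon I \to H$, extend by isotopy extension to an ambient isotopy $F_t$ of $H$ chosen so that $F_t$ also carries a fixed tubular neighborhood of $\bdd H \cup I_0$ to the corresponding neighborhood of $\bdd H \cup f_t(I)$. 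Then $F_1(\Sss) = \Sss$ and $F_1$ is ambient-isotopic to $\mathrm{id}_H$ via $F_t$, so $[F_1] \in G(H, \Sss)$; standard arguments using isotopy extension confirm that $\Phi$ is a well-defined homomorphism.

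For surjectivity, let $\phi \in \Diff(H, \Sss)$ be ambient-isotopic to $\mathrm{id}_H$ via $\phi_t$. Because $H_1$ and $H_2$ are topologically distinct (a handlebody on one side, a compression body with non-empty exterior boundary $\bdd H$ on the other), $\phi(H_2) = H_2$. Any two properly embedded arc spines of the compression body $H_2$ are properly isotopic, so after post-composing $\phi$ with an ambient isotopy of $H$ supported in $H_2$ and fixing $\Sss$ pointwise, we may assume $\phi(I_0) = I_0$. The track $t \mapsto \phi_t(I_0)$ is then a loop in $\Unk(I, H)$ whose image under $\Phi$ recovers $[\phi]$.

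The kernel of $\Phi$ equals the image of the evaluation map $\mathrm{ev} \colon \pi_1(\Diff_0(H)) \to \pi_1(\Unk(I, H))$ sending a loop $\phi_t$ to $\phi_t(I_0)$, where $\Diff_0(H)$ denotes the identity component. Indeed, $\Phi(\ooo) = 1$ means $F_1$ is isotopic to $\mathrm{id}_H$ through $\Sss$-preserving diffeomorphisms; concatenating $F_t$ with the reverse of that isotopy produces a loop in $\Diff_0(H)$ whose image under $\mathrm{ev}$ is $\ooo$, and the converse direction is direct. For $g \geq 2$, no continuous family of self-diffeomorphisms of $H$ moves $I_0$ along a nontrivial loop in $\Unk(I, H)$, so this image is trivial and $\Phi$ is an isomorphism. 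For $g = 1$, rotation of the $S^1$ factor in $H = S^1 \times D^2$ yields a generator of $\pi_1(\Diff_0(H)) \cong \zed$ whose image under $\mathrm{ev}$ is an infinite-order loop in $\Unk(I, H)$, producing the stated short exact sequence.

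The principal difficulty is the kernel computation: one must argue in the $g \geq 2$ case that no loop of diffeomorphisms of a higher-genus handlebody drags $I_0$ nontrivially, and in the $g = 1$ case one must separately verify both that the rotation loop has infinite order in $\pi_1(\Unk(I, H))$ and that no additional kernel elements arise. These verifications depend on known structural results about diffeomorphism groups of handlebodies, which provide the main outside input to the argument.
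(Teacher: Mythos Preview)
Your outline is essentially the long exact sequence of the fibration $\Diff(H,\Sss)\to\Diff(H)\to\H(H,\Sss)$ unwound by hand, so at the structural level it agrees with the paper, which instead quotes \cite{JM} for that sequence and then proves the homotopy equivalence $\H(H,\Sss)\simeq\Unk(I,H)$ via Lemmas~\ref{lemma:I0inH2} and~\ref{lemma:I0inH}.  The difference is where the work is hidden: the paper isolates the passage between ``preserving $I_0$'' and ``preserving $H_2$'' as two explicit lemmas (with a nontrivial appendix), whereas you absorb exactly that passage into the phrases ``standard arguments using isotopy extension'' and ``known structural results.''  Concretely, both the well-definedness of $\Phi$ (two choices of tube-preserving extension $F_t,F_t'$ yield $F_1,F_1'$ in the same component of $\Diff(H,\Sss)$) and the inclusion $\ker\Phi\subset\mathrm{image}(\mathrm{ev})$ (your concatenation $F_t*G_s$ has $\mathrm{ev}$-image $\ooo\cdot[G_s(I_0)]$, and you need $[G_s(I_0)]=1$) require knowing that a path in $\Diff(H,I_0)$ or $\Diff(H,H_2)$ with endpoints in $\Diff(H,H_2,I_0)$ can be pushed rel endpoints into $\Diff(H,H_2,I_0)$.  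That is precisely the content of Lemmas~\ref{lemma:I0inH2} and~\ref{lemma:I0inH}, so your proof is not more elementary than the paper's---it leans on the same facts without naming them.

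There is also a concrete error in the solid-torus case.  You assert $\pi_1(\Diff_0(S^1\times D^2))\cong\zed$, but in fact $\Diff_0(S^1\times D^2)\simeq SO(2)\times SO(2)$ (restrict to $\bdd H$, use $\Diff_0(T^2)\simeq T^2$, and apply \cite{Ha1} to the fiber), so $\pi_1\cong\zed^2$, generated by rotation in $S^1$ and rotation in $D^2$.  Your argument therefore does not yet exclude extra kernel.  The repair is short: the $D^2$-rotation loop evaluates on $I_0$ to $\rho_0^2$, and $\rho_0^2$ is already trivial in $\pi_1(\Unk(I,B^3))\cong\zed_2$, hence trivial in $\pi_1(\Unk(I,H))$.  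So the image of $\mathrm{ev}$ is indeed the cyclic group generated by the $S^1$-rotation, and one still must check (as you note) that this element has infinite order---which follows since it lies in $\free_{E_0}$ and projects to a generator of the longitude factor of $\pi_1(\bdd H)$.
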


\begin{proof} This is a special case of \cite[Theorem 1]{JM}, but there the fundamental group of the space $\H(H, \Sigma) = \Diff(H)/\Diff(H, \Sss)$ takes the place of $\pi_1(\Unk(I, H))$.  So it suffices to show that there is a homotopy equivalence between $\H(H, \Sigma)$ and $\Unk(I, H)$.  We sketch a proof:

\medskip

Fix a diffeomorphism $e: I \to I_0 \subset H$, for $I_0$ as above a specific unknotted arc in $H$.  It is easy to see that for any other embedding $e': I \to H$ with $e'(I)$ unknotted, there is a diffeomorphism $h: H \to H$, so that $he = e'$.  It follows that restriction to $I_0$ defines a surjection $\Diff(H) \to \Emb_0(I, H)$.  Since any automorphism of $I_0$ extends to an automorphism of $H$, this surjection maps the subgroup $\Diff(H, I_0)$   (diffeomorphisms of $H$ that take $I_0$ to itself) onto the space of automorphisms of $I_0$. It follows that $\Unk(I, H) = \Emb_0(I, H)/\Diff(I)$ has the same homotopy type as $\Diff(H)/\Diff(H, I_0).$ 

Suppose for the compression-body $H_2$ in the Heegaard splitting above we take a fixed regular neighborhood of $\bdd H \cup I_0 \subset H$.  Let $\Diff(H, H_2, I_0)$ denote the subgroup of $\Diff(H, H_2)$ that takes $I_0$ to itself.  

\begin{lemma} \label{lemma:I0inH2} The inclusion $\Diff(H, H_2, I_0) \subset \Diff(H, H_2)$ is a homotopy equivalence.  
\end{lemma}

\begin{proof}  First note that the path component $\Diff_0(H, H_2)$ of $\Diff(H, H_2)$ containing the identity is contractible, using first \cite{EE} on $\bdd H_2$ and then \cite{Ha1} on the interiors of $H_1$ and $H_2$.  Any other path component of $\Diff(H, H_2)$ is therefore contractible, since each is homeomorphic to $\Diff_0(H, H_2)$. It therefore suffices to show that any path component of $\Diff(H, H_2)$ contains an element of $\Diff(H, H_2, I_0)$.  Equivalently, it suffices to show that any diffeomorphism of $H_2$ is isotopic to one that sends $I_0$ to $I_0$.  A standard innermost disk, outermost arc argument shows that $H_2$ contains, up to isotopy, a single $\bdd$-reducing disk $D_0$; once $D_0$ has been isotoped to itself and then the point $I_0 \cap D_0$ isotoped to itself in $D_0$, the rest of $I_0$ can be isotoped to itself, using a product structure on $H_2 - \eta(D_0) \cong \bdd H \times I$.
\end{proof}

\begin{lemma} \label{lemma:I0inH}The inclusion $\Diff(H, H_2, I_0) \subset \Diff(H, I_0)$ is a homotopy equivalence.  
\end{lemma}

\begin{proof}  Such spaces have the homotopy type of $CW$-complexes.  (See \cite[Section 2]{HKMR} for a discussion of this and associated properties.)  So it suffices to show that, for any $$\Theta: (B^k, \bdd B^k) \to (\Diff(H, I_0), \Diff(H, H_2, I_0)), k \geq 1,$$ there is a pairwise homotopy  to a map whose image lies entirely inside $\Diff(H, H_2, I_0)$.  The core of the proof is a classic argument in smooth topology; a sketch of the rather complex argument is given in the Appendix. Note that the level of analysis that is required is not much deeper than the Chain Rule in multivariable calculus.\footnote{But in some ways the argument in the Appendix is just a distraction:  First of all, we do not need the full homotopy equivalence to show that the fundamental groups of $\H(H, \Sigma)$ and $\Unk(I, H)$ are isomorphic (but restricting attention just to the fundamental group wouldn't really simplify the argument).  Secondly, we could have, from the outset, informally viewed each isotopy of $I$ in $H$ described in the proof above to be a proxy for an isotopy of a thin regular neighborhood $H'$ of $\bdd H \cup I$;  the difference then between isotopies of $H'$ and isotopies of the compression body $H_2$ is easily bridged, requiring from the Appendix only Lemma \ref{lemma:Hatch} and following.  }
\end{proof} 

A diffeomorphism of $H$ that takes $\Sigma$ to itself will also take $H_2$ to itself, since  $H_2$ and $H_1$ are not diffeomorphic.  Hence $\Diff(H, \Sss) = \Diff(H, H_2)$ and $$\H(H, \Sigma) = \Diff(H)/\Diff(H, \Sss) =  \Diff(H)/\Diff(H, H_2).$$

Lemma \ref{lemma:I0inH2} shows that the natural map $$\Diff(H)/\Diff(H, H_2, I_0) \to \Diff(H)/\Diff(H, H_2)$$ is a homotopy equivalence and Lemma \ref{lemma:I0inH} shows that the natural map $$\Diff(H)/\Diff(H, H_2, I_0) \to \Diff(H)/\Diff(H, I_0)$$ is a homotopy equivalence.  Together these imply that $\H(H, \Sigma) =  \Diff(H)/\Diff(H, H_2)$ is homotopy equivalent to $\Diff(H)/\Diff(H, I_0)$, which we have already seen is homotopy equivalent to $\Unk(I, H)$.  
 \end{proof}
 
 \newpage

\appendix
\section{Moving a diffeomorphism to preserve $H_2$.}

A conceptual sketch of the proof of Lemma \ref{lemma:I0inH} can be broken into six steps.  It is useful to view $\Theta: (B^k, \bdd B^k) \to (\Diff(H, I_0), \Diff(H, H_2, I_0))$ as a family of diffeomorphisms $h_u:(H, I_0) \to (H, I_0)$ parameterized by $u \in B^k$ so that for $u$ near $\bdd B^k$, $h_u(H_2) = H_2.$  By picking a particular value $u_0 \in \bdd B^k$ and post-composing all diffeomorphisms with $h_{u_0}^{-1}$ we may, with no loss of generality, assume that $\Theta(B^k)$ lies in the component of $\Diff(H, I_0)$ that contains the identity.  Here are the six stages:

\begin{enumerate}
\item  (Pairwise) homotope $\Theta$ so that for each $u$, 
$h_u$ is the identity on $\bdd H$.
\item Further homotope $\Theta$ so that for some collar structure $\bdd H \times I$ near $\bdd H$ and for each $u \in B^k$, $h_u|(\bdd H \times I)$ is the identity.
\item Further homotope $\Theta$ so that after the homotopy there is a neighborhood of $I_0$ and a product structure $I_0 \times \mbbR^2$ on that neighborhood so that for all $u \in B^k$, $h_u|(I_0 \times \mbbR^2)$ commutes with projection to $I_0$ near $I_0$.
\item Further homotope $\Theta$ so that for all $u \in B^k$, $h_u|(I_0 \times \mbbR^2)$ is a linear (i. e. $\GL_n$) bundle map over $I_0$ near $I_0$.
\item Further homotope $\Theta$ so that for all $u \in B^k$, $h_u|(I_0 \times \mbbR^2)$ is an orthogonal (i. e. $\Ooo_n$) bundle map near $I_0$. There is then a regular neighborhood $H' \subset H_2$ of $\bdd H \cup I_0$ so that for all $u \in B^k$, $h_u(H') = H'$.
\item Further homotope $\Theta$ so that for the specific regular neighborhood $H_2$ of $\bdd H \cup I_0$ and for each $u \in B^k$, $h_u(H_2) = H_2$.
\end{enumerate}

For the first stage, let $\Theta_{\bdd H}: B^k \to \Diff(\bdd H, \bdd I_0)$ be the restriction of each $h_u$ to $\bdd H$.  The further restriction $\Theta_{\bdd H}|\bdd B^k$ defines an element of $\pi_{n-1}(\Diff(\bdd H, \bdd I_0))$ in the component containing the identity $h_{u_0}|\bdd H$. The space $ \Diff(\bdd H, \bdd I_0)$ is known to be contractible (see \cite{EE}, \cite{ES}, or \cite{Gr}).  Hence $\Theta_{\bdd H}|\bdd B^k$ is null-homotopic.  The homotopy can be extended to give a homotopy of $\Theta_{\bdd H}:B^k \to \Diff(\bdd H, \bdd I_0)$ and then, by the isotopy extension theorem, to a homotopy of $\Theta: B^k \to \Diff(H, I_0)$, after which each $h_u|\bdd H$ is the identity.

The homotopy needed for the second stage is analogous to (and much simpler than) the sequence of homotopies constructed in stages $3$ to $5$, so we leave its construction to the reader.  

Stages 3 through 5 might best be viewed in this context:  One is given a smooth embedding $h: I \times \mbbR^2 \to I \times \mbbR^2$ that restricts to a diffeomorphism $I \times \{0 \} \to I \times \{0 \}$ and which is the identity near $\bdd I \times \mbbR^2$.  One hopes to isotope the embedding, moving only points near $I \times \{0 \}$ and away from $\bdd I \times \mbbR^2$ so that afterwards the embedding is an orthogonal bundle map near $I \times \{0 \}$.  Moreover, one wants to do this in a sufficiently natural way that a $B^k$-parameterized family of embeddings gives rise to a  $B^k$-parameterized family of isotopies.  The relevant lemmas below are proven for general $\mbbR^n$, not just $\mbbR^2$, since there is little lost in doing so. (In fact the arguments could easily be further extended to families of smooth embeddings $D^{\ell} \times \mbbR^n \to D^{\ell} \times \mbbR^n, \ell > 1$).

\bigskip

\noindent {\bf Stage 3: Straightening the diffeomorphism near $I_0$} \label{subs:hstraight}

\begin{defin} A smooth isotopy $f_t: I \times \mbbR^n \to I \times \mbbR^n$ of an embedding $f_0: I \times \mbbR^n \to I \times \mbbR^n$ is {\rm allowable} if it has compact support in $(int\; I) \times \mbbR^n$.  That is, the isotopy is fixed near $\bdd I \times \mbbR^n$ and outside of a compact set in $I \times \mbbR^n$.  
\end{defin}

\begin{defin} A smooth embedding $f: I \times (\mbbR^n, 0) \to I \times  (\mbbR^n, 0)$ commutes with projection to $I$ if for all $(x, y) \in I \times \mbbR^n$, $p_1f(x, y) = f(x, 0) \in I$.  
\end{defin}

\begin{defin} For $A$ a square matrix or its underlying linear transformation, let $|A|$ denote the operator norm of $A$, that is \begin{align*}|A| &= max\{|Ax|: x \in \mbbR^n, |x| = 1\} \\ &= max\{\frac{|Ax|}{|x|}: 0 \neq x \in \mbbR^n\} \end{align*}.
\end{defin}

Let $\phi: [0, \infty) \to [0, 1]$ be a smooth map such that  $\phi([0, 1/2]) = 0$,  $\phi: (1/2, 1) \to (0, 1)$ is a diffeomorphism, and $\phi([1, \infty)) = 1$.  Let $b_0$ be an upper bound for $s \phi'(s)$; for example $\phi$ could easily be chosen so that $b_0 = 4$ suffices.   For any $\epsilon > 0$ let $\phi_{\epsilon}: [0, \infty) \to [0, 1]$ be defined by $\phi_{\epsilon}(s) = \phi(\frac{s}{\epsilon})$.

Then  $$\phi_{\epsilon}([\epsilon, \infty)) = 1$$ and, for any $\epsilon > 0$ and any $s \in [0, \infty)$,  $$s \phi'_{\epsilon}(s) = s \phi'(\frac{s}{\epsilon}) \frac{1}{\epsilon} = \frac{s}{\epsilon} \phi'(\frac{s}{\epsilon}) \leq b_0.$$  Thus $b_0$ is an upper bound for all  $s \phi'_{\epsilon}(s)$.  

Now fix an $\epsilon > 0$ and define $g_1: \mbbR^n \to [0, 1]$ by $g_1(y) = \phi_{\epsilon}(|y|)$.  This is a smooth function on $\mbbR^n$ which is $0$ on the ball $B_{\epsilon/2}$ and $1$ outside of $B_\epsilon$.  Linearly interpolating,  $$g_t(y) = (1-t) + tg_1(y)$$ is a smooth homotopy with support in $B_\epsilon$ from the constant function $1$ to $g_1$. 
Define a smooth homotopy $\lll_t: \mbbR^n \to \mbbR^n, 0 \leq t \leq 1$ by $\lll_t(y) = g_t(y) y$ and note that, by the chain rule, the derivative $D\lll_t(y)(z) = g_t(y)z + t\phi_{\epsilon}'(|y|)(\frac{y}{|y|} \cdot z)y$ so $D\lll_t(y)$ has the matrix $$g_t(y)I_n + t\frac{\phi'_{\epsilon}(|y|)}{|y|}yy^*$$ and so satisfies $$|D\lll_t(y)| \leq g_t(y) + t\phi'_{\epsilon}(|y|)|y|\frac{|yy^*|}{|y^2|} \leq 1 + b_0$$ since the norm of the matrix $yy^*$ is $|y|^2$.  (This last point is best seen by taking $y$ to be a unit vector, $z$ to be any other unit vector and observing that $|yy^*z| = |y(y \cdot z)| \leq |y(y \cdot y)| = 1.$  Note also that the function represented by an expression like $\phi'_{\epsilon}(|y|)/|y|$  is understood to be $0$ when $y = 0$.  Since $\phi'_{\epsilon}(y) \equiv 0$ for $y$ near $0$, the function is smooth.)

The central point of the above calculation is only this: $|D\lll_t(y)|$ has a uniform bound that is independent of $t$ or the value of $\epsilon$ that is used in the construction of $\lll$.  

\begin{lemma}[Handle-straightening] \label{lemma:hstraight} Suppose $f: I \times (\mbbR^n, 0) \to I \times 
(\mbbR^n, 0)$ is an embedding which commutes with projection to $I$  near $\bdd I \times \{0\}$.  Then there is an allowable isotopy of $f$ to an embedding that commutes with projection to $I$ near all of $I \times \{0\}$.  (See Figure \ref{fig:hstraight}.)

Moreover, given a continuous family $f^u, u \in B^k$ of such embeddings so that for $u$ near $\bdd B^k$, $f^u$ commutes with projection to $I$ near  $I \times \{0\}$, a continuous family of such isotopies can be found, and the isotopy is constant for any $u$ sufficiently near $\bdd B^k$.  
\end{lemma}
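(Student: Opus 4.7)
The plan is to use the explicit homotopy $\lll_t(y) = g_t(y) y$ built just before the statement to ``push'' the $y$-dependence out of the first coordinate of $f$.  Writing $f = (f_1, f_2)$, I would define the candidate allowable isotopy by
\[
F_t(x, y) = \bigl( f_1(x, \lll_t(y)),\; f_2(x, y) \bigr), \qquad 0 \le t \le 1,
\]
which satisfies $F_0 = f$ and, since $\lll_1(y) = 0$ for $|y| \le \eee/2$, satisfies $F_1(x, y) = (f_1(x, 0), f_2(x, y))$ on a neighborhood of $I \times \{0\}$; that is precisely the condition that $F_1$ commute with projection to $I$ there.  Because $f$ already commutes with projection near $\bdd I \times \{0\}$, one has $f_1(x, \lll_t(y)) = f_1(x, 0) = f_1(x, y)$ throughout a neighborhood of $\bdd I \times \{0\}$, so every $F_t$ agrees with $f$ there; since $\lll_t$ is the identity for $|y| \ge \eee$, the isotopy also has compact support in the $y$-direction.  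Thus the isotopy is allowable.

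The main technical step will be choosing $\eee$ small enough that every $F_t$ is an embedding.  Since $f$ sends $I \times \{0\}$ diffeomorphically to $I \times \{0\}$, at a point $(x, 0)$ the derivative $Df(x, 0)$ is block upper triangular with invertible diagonal blocks $\bdd_x f_1(x, 0)$ and $\bdd_y f_2(x, 0)$.  The chain rule gives
\[
DF_t(x, y) = \begin{pmatrix} \bdd_x f_1(x, \lll_t(y)) & \bdd_y f_1(x, \lll_t(y))\, D\lll_t(y) \\ \bdd_x f_2(x, y) & \bdd_y f_2(x, y) \end{pmatrix},
\]
and because $|D\lll_t(y)|$ is bounded by the universal constant $1 + b_0$ independent of both $t$ and $\eee$, taking $\eee$ sufficiently small makes $DF_t$ a uniformly small perturbation of $Df$ throughout the support of the isotopy, and so invertible.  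Global injectivity of each $F_t$ then follows from this local invertibility combined with the fact that outside its support $F_t$ coincides with the embedding $f$.

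For the parameterized version, the construction is manifestly natural in $f$, so applied pointwise it yields a continuous family of isotopies $F_t^u$.  Because $B^k$ is compact and the derivatives of $f^u$ vary continuously, a single $\eee > 0$ can be chosen that satisfies the embedding estimate uniformly in $u$.  To make the isotopy constant for $u$ near $\bdd B^k$, I would pick a continuous cutoff $\eta: B^k \to [0, 1]$ that vanishes on a neighborhood of $\bdd B^k$ (where each $f^u$ already commutes with projection) and equals $1$ on the compact set where straightening is still required, then replace $\lll_t$ by $\lll_{t \eta(u)}$ in the definition of $F_t^u$; where $\eta(u) = 0$ this reduces to the constant family $F_t^u \equiv f^u$.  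The only real obstacle I anticipate is the embedding estimate for $F_t$, which is exactly why the uniform bound on $|D\lll_t|$, independent of both $\eee$ and $t$, was prepared in the paragraph preceding the statement; once that bound is in hand, verifying that a uniformly small $\eee$ works is routine.
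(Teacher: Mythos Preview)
Your approach is essentially the paper's: the same formula $F_t(x,y)=(p_1f(x,\lll_t(y)),\,p_2f(x,y))$, the same use of the uniform bound $|D\lll_t|\le 1+b_0$, and the same compactness argument for the parameterized version.  One step is misstated, however.  You write that for small $\eee$ the matrix $DF_t$ is ``a uniformly small perturbation of $Df$''.  It is not: the top-right block $\bdd_y f_1(x,\lll_t(y))\,D\lll_t(y)$ differs from $\bdd_y f_1(x,y)$ by something of order $b_0\,|\bdd_y f_1(x,0)|$, which is bounded but does \emph{not} go to zero with $\eee$.  The correct observation (which you have already set up by noting the block-triangular form of $Df(x,0)$) is that for small $\eee$ the matrix $DF_t(x,y)$ is close to
\[
\begin{pmatrix} \bdd_x f_1(x,0) & \bdd_y f_1(x,0)\,D\lll_t(y) \\ 0 & \bdd_y f_2(x,0) \end{pmatrix},
\]
a block upper-triangular matrix whose diagonal blocks are invertible; hence it is invertible regardless of the size of the off-diagonal block.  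The paper makes this cleaner by first post-composing with a $\GL_n$ bundle map so that $f|I\times\{0\}$ is the identity and $D(p_2f)$ is the identity on each fiber along $I\times\{0\}$; then the limiting matrix is $\left(\begin{smallmatrix}1 & v^* \\ 0 & I_n\end{smallmatrix}\right)$ with $|v^*|$ bounded, and nonsingularity is transparent.

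For the parameterized statement, your cutoff $\eta(u)$ is a perfectly good device.  The paper instead simply shrinks $\eee$ further so that, for $u$ near $\bdd B^k$, the whole tube $I\times B_\eee$ already lies in the region where $f^u$ commutes with projection; then $p_1f^u(x,\lll_t(y))=p_1f^u(x,0)=p_1f^u(x,y)$ automatically and the isotopy is constant there without any cutoff.
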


\begin{proof}
We will construct an isotopy $f_t$ for a given $f = f_0$ and then observe that it has the properties described.  By post-composing with a $GL_n$ bundle map over the diffeomorphism $f^{-1}|I \times \{0\}$, we may as well assume that $f|I \times \{0\}$ is the identity and,  along $I \times \{ 0 \}$, $D(p_2f)$ is the identity on each $\mbbR^n$ fiber $\mbbR^n_x$.  Hence, at any point $(x, 0) \in I \times \{0\}$, the matrix of the derivative $Df: \mbbR \times \mbbR^n \to  \mbbR \times \mbbR^n$ is the identity except for perhaps the last $n$ entries of the first row, which contain the gradient $\nabla (p_1f|R^n_x)$.

For any $\epsilon > 0$, consider the map $$f_t: I \times \mbbR^n \to I \times \mbbR^n$$ defined near $I \times B_\epsilon$ by 
$$f_t \left(\begin{array}{c}x \\  y\end{array}\right) = \left(\begin{array}{c}p_1f(x, \lll_t(y))\\ p_2f(x, y)\end{array}\right)$$ 
and fixed at $f$ outside $I \times B_\epsilon$.  For any value of $t$, the derivative of $f_t$ at any $(x, y) \in I \times \mbbR^n$ differs from that of $f$ by multiplying its first row on the right by the matrix 
$$\left(\begin{array}{cc}1 & 0 \\ 0 & D\lll_t(y)\end{array}\right).$$ For $\epsilon$ small, the first row of $Df_t$ will then be quite close to a row vector of the form $$\left(\begin{array}{cc}1 & v^*\end{array}\right)$$ where $|v^*| \leq (1 + b_0)|\nabla (p_1f|\mbbR^n_x)|$ and the matrix for $Df_t$ will be quite close to the matrix $$\left(\begin{array}{cc}1 & v^* \\ 0 & I_n\end{array}\right).$$ So, although we cannot necessarily make $v^*$ small by taking $\epsilon$ small, the entries in $v^*$ are at least bounded by a bound that is independent of $\epsilon$, and, by taking $\epsilon$ small, the rest of the matrix can be made to have entries arbitrarily close to those of the identity matrix.  In particular, for $\epsilon$ sufficiently small, $Df_t$ will be non-singular everywhere, and so each $f_t$ will be a smooth embedding.  Thus $f_t$ will be an allowable isotopy to a smooth embedding $f_1$ that commutes with projection to $I$ on $I \times B_\frac{\epsilon}2$, since on $I \times B_\frac{\epsilon}2$ we have  $p_1f(x, \lll_1(y)) = p_1f(x, 0) = f(x, 0) \in I.$

 \begin{figure}[ht!]
 \labellist
\small\hair 2pt
\pinlabel $I$ at 75 50
\pinlabel $\mbbR^n$ at 165 132
\pinlabel $f$ at 190 120
\pinlabel $f_1$ at 190 85
\pinlabel $B_\epsilon$ at -7 102
 \endlabellist
    \centering
    \includegraphics[scale=.9]{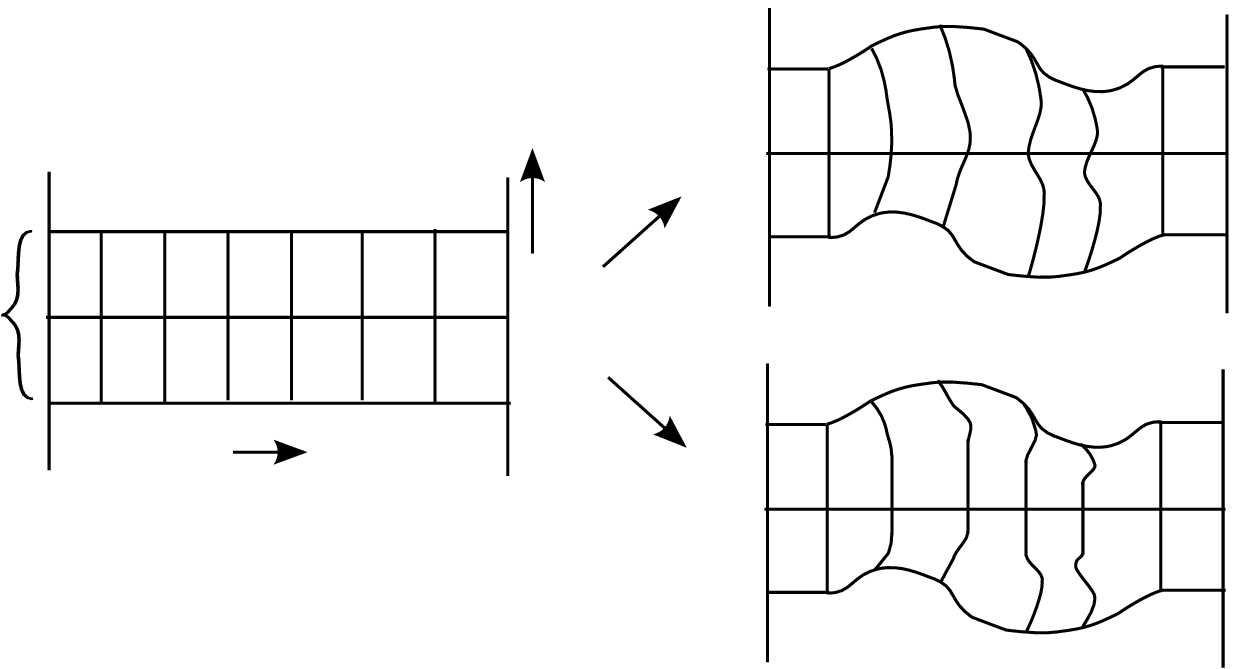}
\caption{} \label{fig:hstraight}
    \end{figure}

The extension to a parameterized family of embeddings $f^u, u \in B^k$ is relatively easy:  pick $\epsilon$ so small (as is possible, since $B^k$ is compact) so that the above argument works simultaneously on each $f^u, u \in B^k$ and also so small that, for each $u$ near $\bdd B^k$, $I \times B_{\epsilon}$ lies within the area on which $f^u$ already commutes with projection to $I$.   
\end{proof}

\noindent {\bf Stage 4: Linearizing the diffeomorphism near $I_0$}


\begin{lemma}[$\bf{Diff_n/GL_n}$] \label{lemma:DiffGL} Suppose $f: I \times (\mbbR^n, 0) \to I \times 
(\mbbR^n, 0)$ is an embedding which commutes with projection to $I$ and which is a $\GL_n$ bundle map near $\bdd I \times \{0\}$.  Then there is an allowable isotopy of $f$, through embeddings which commute with projection to $I$, to an embedding that is a $\GL_n$ bundle map near $I \times \{0\}$. 

Moreover, given a continuous family $f^u, u \in B^k$ of such embeddings so that for $u$ near $\bdd B^k$, $f^u$ is a $\GL_n$ bundle map near $I \times \{0\}$, a continuous family of such isotopies can be found, and the isotopy of $f^u$ is constant for any $u$ sufficiently near $\bdd B^k$.  
\end{lemma}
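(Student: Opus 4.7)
The plan is to linearize $F$ near $y=0$ using a second-order Taylor expansion together with the cutoff machinery developed for Lemma \ref{lemma:hstraight}. Since $f$ commutes with projection to $I$, write $f(x,y)=(\bar f(x), F(x,y))$ with $F(x,0)=0$, and let $A(x):=D_y F(x,0)$. Invertibility of $f$ forces $A(x)\in\GL_n$, so $A\colon I\to\GL_n(\mbbR)$ is a smooth path. The goal is to produce, through fiberwise embeddings that commute with projection to $I$, an allowable isotopy of $f$ after which $F(x,y)=A(x)y$ on a tubular neighborhood of $I\times\{0\}$.

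First I would expand $F(x,y)=A(x)y+R(x,y)$, where $R$ vanishes to second order at $y=0$. By Taylor's theorem and compactness of $I$, there is a uniform constant $C$ with $|R(x,y)|\le C|y|^2$ and $|D_y R(x,y)|\le C|y|$ on a fixed neighborhood of $I\times\{0\}$. Using the cutoff $\phi_\epsilon$ from the preamble to Lemma \ref{lemma:hstraight}, define
\begin{equation*}
F_t(x,y) \;=\; A(x)y + \bigl[1-t\bigl(1-\phi_\epsilon(|y|)\bigr)\bigr]\,R(x,y),
\end{equation*}
and replace $f$ by $f_t(x,y)=(\bar f(x), F_t(x,y))$. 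Then $F_0=F$, $F_t\equiv F$ outside $I\times B_\epsilon$ for all $t$, and $F_1(x,y)=A(x)y$ on $I\times B_{\epsilon/2}$. Near $\bdd I\times\{0\}$, where $F$ is already linear, $R\equiv 0$ and the isotopy is trivial; and each $F_t$ commutes with projection to $I$ because $p_1$ is independent of $t$ and $y$.

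The main point to verify is that each $F_t(x,\cdot)\colon\mbbR^n\to\mbbR^n$ remains an embedding. Differentiating in $y$ gives $D_y F_t = A(x)+E_1+E_2$, where $E_1$ involves $D_y R=O(|y|)$ and $E_2$ involves $\phi'_\epsilon(|y|)\,R(x,y)\,y^*/|y|$. On $B_\epsilon$, the uniform bound $s\phi'_\epsilon(s)\le b_0$ from Lemma \ref{lemma:hstraight}, combined with $|R|\le C|y|^2$, forces each $E_i$ to have operator norm at most a constant multiple of $\epsilon$. Hence for $\epsilon$ small enough, $D_y F_t$ is uniformly close to the nonsingular $A(x)$; a mean value estimate then yields a fiberwise bi-Lipschitz bound $|F_t(x,y_1)-F_t(x,y_2)|\ge c|y_1-y_2|$ on $B_\epsilon$, which together with $F_t=F$ outside $B_\epsilon$ upgrades each $F_t(x,\cdot)$ from an immersion to an injective immersion, hence an embedding.

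For the parameterized family, compactness of $B^k$ allows a single $\epsilon$ to work for all $u\in B^k$. Where $f^u$ is already a $\GL_n$ bundle map on a fixed neighborhood of $I\times\{0\}$, the remainder $R^u$ vanishes there and the formula automatically gives the constant isotopy; to make such a neighborhood uniform for $u$ near $\bdd B^k$, I would additionally multiply $t$ by a smooth cutoff $\rho(u)$ vanishing near $\bdd B^k$. The main obstacle, as in Lemma \ref{lemma:hstraight}, is the uniform control on the error terms; the remainder is bookkeeping and continuous dependence of all quantities on $u$.
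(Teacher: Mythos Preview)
Your argument is correct and is essentially a variant of the paper's own proof. Both proofs use the same cutoff machinery $\phi_\epsilon$ and the crucial uniform bound $s\phi'_\epsilon(s)\le b_0$ to control the derivative of the interpolation; both localize the modification to $I\times B_\epsilon$ and rely on compactness of $B^k$ for the parametrized version. The only real difference is bookkeeping: the paper first normalizes so that $A(x)=I_n$, invokes the Hadamard lemma to write $f(y)=\sum y_i\psi_{i,x}(y)$ with $\psi_{i,x}(0)=e_i$, and then shrinks the \emph{argument} of $\psi$ via $f_t(y)=\sum y_i\psi_{i,x}(\lambda_t(y))$; you instead keep $A(x)$ explicit and damp the second-order remainder $R$ directly. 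The paper's route avoids needing the explicit quadratic bounds $|R|\le C|y|^2$, $|D_yR|\le C|y|$ (continuity of $\psi_i$ and $D\psi_i$ near $0$ suffices), which makes the derivative estimate marginally cleaner; your route is perhaps more transparent about what is being interpolated. One small point: the extra cutoff $\rho(u)$ you introduce at the end is unnecessary---by compactness of a closed collar of $\bdd B^k$ you can choose $\epsilon$ so small that $I\times B_\epsilon$ already lies inside the region where each such $f^u$ is linear, and then $R^u\equiv 0$ on $I\times B_\epsilon$ forces your formula to be the constant isotopy there automatically. This is how the paper handles it.
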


\begin{proof}
For each $x \in I$ consider the restriction $f|\mbbR^n_x$ of $f$ to the fiber $\mbbR^n_x$ over $x$.  By post-composing $f$ with the $\GL_n$ bundle map over $f^{-1}|I \times \{0\}$ determined by $D(f|\mbbR^n_x)(0)^{-1}$ we may as well assume that $f|I \times \{0\}$ is the identity and that $D(f|\mbbR^n_x)(0)$ is the identity for each $x$.  There are differentiable maps $\psi_{i,x}: \mbbR^n_x \to \mbbR^n_x, 1 \leq i \leq n$, smoothly dependent on $x$, so that for $y \in R^n_x$, $f(y) = \Sss_{i=1}^n y_i \psi_{i, x}(y)$ and $\psi_{i,x}(0) = e_i$, the $i^{th}$ unit vector in $\mbbR^n$ \cite[Lemma 2.3]{BJ}.  

Choose $\epsilon$ small and let $\lll_t: \mbbR^n \to \mbbR^n$ be the homotopy defined above using $\epsilon$.  For each $t \in [0, 1]$ define $f_{t}:I \times (\mbbR^n, 0) \to I \times (\mbbR^n, 0)$ as the bundle map for which each $f_{t}|\mbbR^n_x$ is given by $\Sss_{i=1}^n y_i \psi_{i, x}(\lll_t(y))$. When $t = 1$ the function on $B_{\epsilon/2} \subset \mbbR^n_x$ is given by $\Sss_{i=1}^n y_i e_i$, i. e. the identity.   As in Step 1, the bound $|D\lll_t(y_0)| \leq 1 + b_0$ guarantees that if $\epsilon$ is chosen sufficiently small the derivative of $\Sss_{i=1}^n y_i \psi_{i, x}(\lll_t(y))$ is close to the identity throughout $I \times B_{\epsilon}$; hence $f_t$ remains a diffeomorphism for each $t$.  

The extension to a parameterized family of embeddings $f^u$ is done as in Stage 3 (Sub-section \ref{subs:hstraight}) above.  
\end{proof}

\noindent {\bf Stage 5: Orthogonalizing the diffeomorphism near $I_0$}

\begin{lemma}[$\bf{GL_n/O_n}$] \label{lemma:GLO} Suppose $f: I \times (\mbbR^n, 0) \to I \times 
(\mbbR^n, 0)$ is a $\GL_n$ bundle map which is an $\Ooo_n$ bundle map near $\bdd I \times \{0\}$.  Then there is an allowable isotopy of $f$, through embeddings which commute with projection to $I$, to an embedding that is an $\Ooo_n$ bundle map near $I \times \{0\}$. 

Moreover, given a continuous family $f^u, u \in B^k$ of such embeddings so that for $u$ near $\bdd B^k$, $f^u$ is an $\Ooo_n$ bundle map near $I \times \{0\}$, a continuous family of such isotopies can be found, and the isotopy of $f^u$ is constant for any $u$ sufficiently near $\bdd B^k$.  
\end{lemma}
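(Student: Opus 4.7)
The plan is to use the classical deformation retraction of $\GL_n$ onto $\Ooo_n$ given by polar decomposition. Writing $A_x = Q_x P_x$ with $Q_x \in \Ooo_n$ and $P_x$ symmetric positive definite, the path
$$A_x^s = Q_x\bigl[(1-s)P_x + sI\bigr], \qquad s \in [0,1],$$
lies in $\GL_n$ throughout (since $(1-s)P_x + sI$ is a convex combination of two positive definite matrices, hence positive definite) and connects $A_x^0 = A_x$ to $A_x^1 = Q_x$. Moreover $P_x = I$ wherever $A_x$ is already orthogonal, so the path is constant for $x$ near $\bdd I$ and, in the parameterized version, for $u$ near $\bdd B^k$.

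Realizing this path globally as the bundle-map isotopy $(h(x), A_x^t y)$ violates allowability (it is not compactly supported in $y$), and the naive fix of inserting a bump $\nu_\epsilon(|y|)$ in the exponent produces a rank-one perturbation of the Jacobian whose operator norm is bounded by $b_0 |Q_x - A_x|$, a quantity independent of $\epsilon$ that need not be small. So a single-step isotopy cannot be guaranteed to be through embeddings. The plan is therefore to traverse the path in $N$ small substeps with $s_k = k/N$, $\delta = 1/N$, and at each step multiplicatively modify the current embedding near the zero section by the matrix
$$C_k := A_x^{s_{k+1}}\bigl(A_x^{s_k}\bigr)^{-1} = I + O(\delta),$$
uniformly in $x$ (and, in the family case, in $u$).

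Explicitly, letting $F^{(k-1)}$ denote the $\mbbR^n$-component of the embedding produced by the first $k$ substeps, define the $k$th substep by
$$F^{(k)}_\tau(x, y) = \bigl((1 - \tau\nu_\epsilon(|y|))\,I + \tau \nu_\epsilon(|y|)\, C_k\bigr) \cdot F^{(k-1)}(x, y), \qquad \tau \in [0,1].$$
By induction $F^{(k-1)}(x, 0) = 0$, so $|F^{(k-1)}(x, y)| \le K|y|$ on the support ball. The Jacobian at step $k$ is the sum of a main term (close to $DF^{(k-1)}$ since $C_k$ is close to $I$) and a rank-one term of the outer-product form $c \cdot ((C_k - I)F^{(k-1)})\, y^T / |y|$; the same $s\phi'_\epsilon(s) \le b_0$ estimate used in Stages 3 and 4 controls this perturbation by $b_0 K |C_k - I| = O(\delta)$, independent of $\epsilon$. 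For $N$ sufficiently large this is smaller than the (uniformly positive) smallest singular value of $DF^{(k-1)}$, so each $F^{(k)}_\tau$ is an embedding. By construction $F^{(k)}_1$ equals $A_x^{s_{k+1}} y$ for $|y| \le \epsilon/2$ and equals $F^{(k-1)}$ for $|y| \ge \epsilon$, so the composite isotopy is allowable and ends with $Q_x y$ near $I \times \{0\}$ — an $\Ooo_n$ bundle map as required.

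The main obstacle is precisely the Jacobian invertibility issue: a single-step bump interpolation cannot be shown to remain an embedding because the relevant rank-one perturbation does not shrink with $\epsilon$. The $N$-step multiplicative passage along the polar path solves this by trading the uncontrolled quantity $|Q_x - A_x|$ for the small quantity $|C_k - I|$ at each substep. For the parameterized statement, continuity of the polar decomposition makes $Q_x^u$, $P_x^u$, and $C_k^u$ depend continuously on $u \in B^k$; compactness of $B^k$ permits a single choice of $N$ and $\epsilon$ valid for all parameters, and the isotopy is automatically trivial for $u$ near $\bdd B^k$ since $C_k^u = I$ there.
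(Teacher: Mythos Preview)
Your identification of the obstacle is exactly right, and it is the same one the paper flags in its ``worrisome example'': the rank-one Jacobian perturbation coming from the bump has norm bounded by $b_0$ times a quantity that does not shrink with $\epsilon$. But your $N$-step fix does not close the gap. The claim that the smallest singular value of $DF^{(k-1)}$ is ``uniformly positive'' over all $k \le N$ is precisely what needs proof, and the obvious recursive estimate fails: at each step the lower bound drops by an $O(\delta)$ amount (from both the $M_\tau$ factor and the rank-one term), and after $N = 1/\delta$ steps the accumulated loss is $O(1)$, not $o(1)$. Concretely, in the transition annulus $\epsilon/2 < |y| < \epsilon$ the map $F^{(k-1)}$ is no longer linear in $y$ --- it is the full product $\prod_{j \le k-1} M^{(j)}(y) \cdot A_x y$ carrying all the residue of previous steps --- and you give no reason its Jacobian should stay uniformly invertible as $k$ grows. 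Taking $N$ large does not help, because every step deposits a fresh $O(\delta)$ perturbation in the \emph{same} annulus.

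The paper's resolution is different and exploits a feature you have not used: since $f$ is already a $\GL_n$ bundle map it is scale-invariant, so there is no reason to confine the isotopy to a small ball. One may therefore replace $\phi$ (with its fixed bound $s\phi'(s) \le b_0$) by a bump satisfying $s\phi'(s) < \kappa$ for an arbitrarily small $\kappa$; the support of $\phi$ then stretches out to radius roughly $e^{2/\kappa}$, but in this stage that costs nothing. With $\kappa$ chosen below an explicit constant built from $\sup_{\tau,x} |(T_x)_\tau^{-1}|$ and $\sup_x |T_x - I|$ (the paper uses the QR factor $T_x$ rather than your polar factor $P_x$, but either works), the single-step interpolation already has nonsingular Jacobian everywhere. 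Your scheme can be repaired --- for instance by nesting the supports, taking $\epsilon_k < \epsilon_{k-1}/2$ so that at step $k$ the modification occurs only where $F^{(k-1)}$ is already the linear map $A_x^{s_{k-1}}y$ and the singular-value bound is then manifestly uniform in $k$ --- but as written the argument does not go through.
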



\begin{proof}
Once again we may as well assume $f|I \times \{0\}$ is the identity and focus on the linear maps $f|R^n_x$.

As a consequence of the Gram-Schmidt orthogonalization process,
any matrix $A \in GL_n$ can be written uniquely as the product $QT$ of an orthogonal matrix $Q$ and an upper triangular matrix $T$ with only positive entries in the diagonal.  The entries of $Q$ and $T = Q^tA$ depend smoothly on those of $A$.  In particular, by post-composing $f$ with the orthogonal bundle map determined by the inverse of the orthogonal part of $(Df_x)(0)$ we may as well assume that for each $x \in I$, $f|\mbbR^n_x$ is defined by an upper triangular matrix $\T_x$ with all positive diagonal entries.  

\bigskip

\noindent {\bf A worrisome example:}  It seems natural to use the function $g_t$ defined above to interpolate linearly between $\T_x$ and the identity, in analogy to the way $g_t$ (via $\lll_t$) was used in Stages 3 and 4.  Here this would mean setting  $$f_t(y)= [I_n + g_t(y)(\T_x - I_n)]y \hspace{1cm} \forall y \in R^n_x.$$ 

This strategy fails without control on the bound $b_0$ of $s \phi' (s)$, even in the relevant case $n = 2$, because $f_t$ may fail to be a diffeomorphism.   For example, if $$\T_x = \left(\begin{array}{cc}1 & r \\ 0 & 1\end{array}\right)$$ then with the above definition $$(D(f_1|\mbbR^2_x)_y)(z) = \left(\begin{array}{cc}1 & g_1(y)r \\ 0 & 1\end{array}\right)z + \left(\begin{array}{c}\phi'(|y|)(ry_2)(y\cdot z)/|y| \\ 0\end{array}\right)$$
Choose $z = \left(\begin{array}{c}1 \\ 0\end{array}\right)$ and get the vector $$\left(\begin{array}{c}1 + \phi'(|y|)(ry_1y_2)/|y| \\ 0\end{array}\right) = \left(\begin{array}{c}1 + r\phi'(|y|)|y|\frac{y_1y_2}{y_1^2 + y_2^2} \\ 0\end{array}\right)$$  For a fixed value of $|y|$, the ratio $\frac{y_1y_2}{y_1^2 + y_2^2}$ takes on every value in $[-1/2, 1/2]$.  So, unless $\phi'(|y|)|y| < \frac2{|r|}$, which could be much smaller than $b_0$, the vector $(D(f_1|\mbbR^2_x)_y)(z)$ will be trivial for some $y$. At this value of $y$, $D(f_1|\mbbR^2_x)_y$ would be singular, so $f_1|\mbbR^2_x$ would not be a diffeomorphism. 

\bigskip

On the other hand, in contrast to the previous two stages, there is no advantage to restricting the support of the isotopy $f_t$ to an $\epsilon$ neighborhood of $I \times \{0\}$ since $f$, as a $\GL_n$ bundle map, is independent of scale. So we are free to choose $\phi$ a bit differently:

Given $\kkk > 0$, let $\phi: [0, \infty) \to [0, 1]$ be a smooth, monotonically non-decreasing map such that $\phi([0, 1]) = 0$, $\phi(s) = 1$ outside some closed interval, and, for all $s$, $s \phi'(s) < \kkk$.  For example, $\phi$ could be obtained by integrating a smooth approximation to the discontinuous function on $[0, \infty)$ which takes the value $\kappa/2s$ for $s \in [1, e^{\frac2\kkk}]$ but is otherwise $0$.

Suppose $A$ is any upper triangular matrix with positive entries in the diagonal and $\tau \in [0, 1]$.  Then the matrix $A_\tau = (I + \tau(A-I)) = (1 - \tau)I + \tau A$ is invertible, since it also is upper triangular and has positive diagonal entries.  Suppose $\kkk_1, \kkk_2 \in [0, \infty)$ satisfy $$\kkk_1 \leq \frac1{sup_\tau |A_\tau^{-1}|}, \hspace{1in} \kkk_2 \leq \frac1{|A - I|}.$$  Then, for any $\tau\in [0, 1]$ and any $y, z \neq 0 \in \mbbR^n$, $$|A_\tau z| \geq \kkk_1|z|, \hspace{1in} \kkk_2|(A - I)y| \leq |y|.$$  In particular, if $0 \leq \kkk < \kkk_1 \kkk_2$ then $$|A_\tau(z) + \kkk\frac{|z|}{|y|}(A-I)y| \geq |A_\tau(z) | - \kkk\frac{|z|}{|y|}|(A-I)y| > \kkk_1|z| -  \kkk_1\frac{|z|}{|y|}|y| = 0.$$

Apply this to the problem at hand by choosing values $\kkk_1, \kkk_2$ so that for all $x \in [0, 1]$, $$\kkk_1 \leq \frac1{sup_\tau |(T_x)_\tau^{-1}|}, \hspace{1in} \kkk_2 \leq \frac1{|T_x - I_n|}.$$  Then choose $\phi: [0, \infty) \to [0, 1]$ as above so that for all $s \in  [0, \infty)$, $s \phi'(s) < \kkk_1 \kkk_2$.  Much as in Step 2, define the smooth homotopies $g_t: \mbbR^n \to [0, 1]$ and $f_t|\mbbR^n_x: \mbbR^n_x \to \mbbR^n_x$ via 
\begin{align*} g_t(y) &= (1-t) + t\phi(|y|)(y) \\ f_t(y) &= (I_n + g_t(y)(T_x - I_n)).\end{align*}   
Then for all $x \in I$ and each $y \neq 0 \neq z \in \mbbR^n_x$, 
\begin{align*}D(f_t|\mbbR^n_x)_y(z) &= (I + g_t(y)(T_x - I_n))z + Dg_t(y)(z)(T_x - I_n)y \\ &= (I + g(y)(T_x - I_n))z +  t\phi'(|y|)\frac{y \cdot z}{|y|}(T_x - I_n)y.\end{align*}
Now notice that by construction $$t\phi'(|y|)\frac{y \cdot z}{|y|} \leq \phi'(|y|)|y|\frac{y \cdot z}{|y|^2} < \kkk_1\kkk_2 \frac{|z|}{|y|}$$ so, applying the argument above to $\tau = g(y)$ and $A = T_x$, we have $|D(f_t|\mbbR^n_x)_y(z)|>0$.  Thus $D(f_t|\mbbR^n_x)$ is non-singular everywhere and so, for all $t$, $f_t$ is a diffeomorphism, as required.  

The extension to a parameterized family $f^u, u \in B^k$ is essentially the same as in the previous two cases, after choosing $\kappa$ to be less than the infimum of $\kappa_1\kappa_2$ taken over all $u \in B^k$.  Note that for $u$ near $\bdd B^k$, where $f^u$ is already an orthogonal bundle map, each $T_x$ will be the identity, so, regardless of the value of $t$ or $y$ in the definition $(f_t|\mbbR^n_x)y = (I_n + g_t(y)(T_x - I_n))y,$ the function is constantly the identity.  
\end{proof}

\bigskip

\noindent {\bf Stage 6: From preserving $H'$ to preserving $H_2$.}

\medskip

The previous stages allow us to define a possibly very thin regular neighborhood $H' \subset H_2$ of $\bdd H \cup I_0$ and a relative homotopy of $\Theta$ to a map $(B^k, \bdd B^k) \to (\Diff(H, H', I_0), \Diff(H, H_2, I_0))$.   Continue to denote the result as $\Theta$ and invoke this special case of Hatcher's powerful theorem:

\begin{lemma} \label{lemma:pseudo} \label{lemma:Hatch} Suppose $F$ is a closed orientable surface.  Any $\psi: S^k \to \Diff(F \times I, F \times \{ 0 \} )$ is homotopic to a map so that each $\psi(u): F \times I \to F \times I$ respects projection to $I$. (In fact, unless $F$ is a torus or a sphere, there is a diffeomorphism $f: F \to F$ and a homotopy of $\psi$ to a map so that each $\psi(u)$ is just $f \times id_I$.)
\end{lemma}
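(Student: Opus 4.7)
The plan is to deduce this lemma from Hatcher's theorem on pseudo-isotopies of surfaces, together with the classical contractibility results for diffeomorphism groups of surfaces. Write $\Diff^{\mathrm{lp}}(F \times I)$ for the subspace of level-preserving diffeomorphisms, i.e.\ those of the form $(x,t) \mapsto (g_t(x), t)$ for some smooth path $g : I \to \Diff(F)$. Restriction to $F \times \{0\}$ makes both the ambient group $\Diff(F \times I, F \times \{0\})$ and its subgroup $\Diff^{\mathrm{lp}}(F \times I)$ into total spaces of Palais--Cerf fibrations over $\Diff(F)$. The fiber for the subspace is the space of smooth paths in $\Diff(F)$ starting at a given diffeomorphism, which is contractible; the fiber for the ambient space is the pseudo-isotopy space $\mathcal{P}(F) = \{h \in \Diff(F \times I) : h|_{F \times \{0\}} = \mathrm{id}\}$.

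First I would invoke Hatcher's theorem that $\mathcal{P}(F)$ is weakly contractible for every closed orientable surface $F$. Comparing the two fibrations via the five lemma then shows that the inclusion $\Diff^{\mathrm{lp}}(F \times I) \hookrightarrow \Diff(F \times I, F \times \{0\})$ is a weak homotopy equivalence. Since both spaces have CW homotopy types (as already used in \cite[Section 2]{HKMR}), any $\psi : S^k \to \Diff(F \times I, F \times \{0\})$ is homotopic to a map landing in $\Diff^{\mathrm{lp}}(F \times I)$. This is the first assertion of the lemma.

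For the parenthetical strengthening, observe that $\Diff^{\mathrm{lp}}(F \times I)$ deformation retracts (via the linear shrinking $g_t \mapsto g_{(1-s)t}$) onto the subspace $\{f \times \mathrm{id}_I : f \in \Diff(F)\} \cong \Diff(F)$ of constant paths. When $F$ is neither $S^2$ nor $T^2$, every component of $\Diff(F)$ is contractible by Earle--Eells (for genus $\geq 2$), so the composition $\psi : S^k \to \Diff^{\mathrm{lp}}(F \times I) \simeq \Diff(F)$ lies in a single component and is homotopic to a constant map, namely $u \mapsto f \times \mathrm{id}_I$ for a single $f \in \Diff(F)$. The excluded cases do require exclusion: one has $\Diff_0(S^2) \simeq \mathrm{SO}(3)$ and $\Diff_0(T^2) \simeq T^2$, both with nontrivial higher homotopy, so a genuinely nontrivial homotopy class in $\pi_k$ cannot be trivialized.

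The essential content is Hatcher's pseudo-isotopy theorem for surfaces; granted that input, the main obstacle reduces to formal bookkeeping with the Palais--Cerf fibrations and to citing the component-wise contractibility results of Earle--Eells and Gramain. The intuition is that once restriction to $F \times \{0\}$ is controlled, the only remaining freedom is how a diffeomorphism twists through the $I$-direction, and pseudo-isotopy of surfaces says that this freedom is trivial up to isotopy.
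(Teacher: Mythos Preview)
Your argument is sound for the case the paper actually proves (genus $\geq 2$) and rests on the same two external inputs---Hatcher's theorem for Haken $3$-manifolds applied to $F\times I$, and the Earle--Eells/Gramain contractibility of $\Diff_0(F)$---but packages them differently. The paper proceeds more concretely: it first uses Earle--Eells together with isotopy extension to homotope $\psi$ so that every $\psi(u)$ restricts to the identity on \emph{both} boundary surfaces $F\times\{0\}$ and $F\times\{1\}$, and only then invokes Hatcher's theorem on $\Diff(F\times I\ \mathrm{rel}\ \partial)$ to contract what remains. You instead invoke Hatcher in a pseudo-isotopy form (contractibility of the fiber $\mathcal{P}(F)$ over one end), compare two Palais fibrations over $\Diff(F)$ via the five lemma to get the weak equivalence with the level-preserving subspace, and reserve Earle--Eells for the final collapse to a constant. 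Your route is conceptually slicker; the paper's is more hands-on and avoids appealing to CW homotopy types for the lifting step.

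One caveat: your blanket assertion that $\mathcal{P}(F)$ is weakly contractible for \emph{every} closed orientable surface is not quite right. Restriction to $F\times\{1\}$ gives a fibration $\mathcal{P}(F)\to\Diff_0(F)$ whose fiber $\Diff(F\times I\ \mathrm{rel}\ \partial)$ is contractible by Hatcher, so $\mathcal{P}(F)\simeq\Diff_0(F)$; for the torus this is $\simeq T^2$, not contractible, and indeed the inclusion of level-preserving diffeomorphisms then fails to be $\pi_1$-surjective. The paper sidesteps this by explicitly setting the torus and sphere aside as special cases that are not needed for the application.
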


\begin{proof}  The special case in which $F$ is a torus or a sphere is left to the reader; it will not be used.  Pick a base point $u_0 \in S^k$ and let $\psi_0 = \psi(u_0): F \times I \to F \times I$.  Take $f$ in the statement of the Lemma to be $\psi_0|(F \times \{ 0 \}): F \to F$ and, with no loss of generality, assume this diffeomorphism is the identity.  Since $\Diff(F)$ is contractible \cite{EE}, the map $\psi_|: S^k \to \Diff(F)$ defined by $\psi_|(u) = \psi(u)|(F \times \{ 0 \})$ can be deformed so that each $\psi_|(u)$ is the identity.  The homotopy of $\psi_|$ induces a homotopy of $\psi$ via the isotopy extension theorem.  

The map $p_1\psi_0: F \times I \to F$ defines a homotopy from $\psi_0|(F \times \{1\}):F \to F$ to the identity, and this implies that  $\psi_0|(F \times \{1\})$ is isotopic to the identity.  The previous argument applied to diffeomorphisms of 
$F \times \{1\}$ instead of $F \times \{0\}$ then provides a further homotopy of $\psi$, after which each diffeomorphism $\psi(u): F \times I \to F \times I$ is the identity on $F \times \{ 0, 1 \} = \bdd(F \times I)$.  That is, after the homotopy of $\psi$, $\psi$ maps $S^k$ enirely into the space of diffeomorphisms of $F \times I$ that are the identity on $\bdd (F \times I)$.  The lemma then follows from the central theorem of \cite{Ha1}.
\end{proof}

The region $H_2 - \inter(H')$ between the regular neighborhoods of $\bdd H \cup \I_0$ is a collar which can be parameterized by $\Sss \times I$.  With this in mind, apply Lemma \ref{lemma:pseudo} to $\psi = \Theta|\bdd B^k: S^{k-1} \to \Diff(H_2 - \inter(H'))$, extending the parameterization slightly outside of $H_2 - \inter(H')$ via an argument like that in Stage 2 above.  We then have a parameterization $\Sss \times \mbbR$ of a neighborhood of $H_2 - \inter(H')$ so that $\bdd H_2$ corresponds to $\Sss \times \{1\}$, $\bdd H'$ corresponds to $\Sss \times \{0\}$, and so that for each $u \in \bdd B^k$, the restriction of $\Theta(u): H \to H$ to $\Sss \times \mbbR$ respects projection to $\mbbR.$   Let $g_t: \mbbR \to \mbbR$ be a smooth isotopy with compact support from the identity to a diffeomorphism that takes $1 \in \mbbR$ to $0.$  Define the isotopy $r_t: \Sss \times \mbbR \to \Sss \times \mbbR$ by $r_t(y, s) = (y, g_t(s))$ and extend by the identity to an isotopy of $H$.  The result is an isotopy from the identity to a diffeomorphism that takes $H_2$ to $H'$.  Then the deformation $\Theta_t$ of $\Theta$ defined by $\Theta_t(u) = r_t^{-1}\Theta(u)r_t: H \to H$ pairwise homotopes $\Theta: (B^k, \bdd B^k) \to (\Diff(H, I_0), \Diff(H, H_2, I_0))$  to a map whose image lies entirely in $\Diff(H, H_2, I_0)$.   


\begin{thebibliography}{5}

\bibitem[Ak]{Ak} E.~Akbas, 
A presentation for the automorphisms of the 3-sphere that preserve a genus two Heegaard splitting, {\em Pacific J. Math. }  {\bf 236} (2008) 201Ð222. 

\bibitem[Bi]{Bi} J.~Birman, Braids, links and mapping class groups, \underline{Annals of Mathematics Studies} {\bf 82}. Princeton University Press, 1974.

\bibitem[BJ]{BJ} T.~Br\"ocker and J.~J\"anich, Introduction to differential topology, Cambridge, 1973.

\bibitem[Cho]{Cho} Cho, Homeomorphisms of the 3-sphere that preserve a Heegaard splitting of genus two, {\em Proc. Amer. Math. Soc. }  {\bf 136}  (2008) 1113Ð1123

\bibitem[EE]{EE} C.~Earle and J.~Eells, A fibre bundle description of Teichm\"uller theory, {\em J. Differential Geometry }  {\bf  3} (1969) 19Ð43. 

\bibitem[ES]{ES} C.~Earle and A.~Schatz,
TeichmŸller theory for surfaces with boundary, {\em J. Differential Geometry }  {\bf   4} (1970) 169Ð185. 

\bibitem[FN]{FN} E.~Faddell and L.~Neuwirth, Configuration spaces, {\em Math. Scand.},  {\bf 10}  (1962) 111--118.

\bibitem[Ga]{Ga} D.~Gabai,  Foliations and the topology of $3$-manifolds. III, {\em J. Differential Geom.  },  {\bf 26}  (1987) 479Ð536. 

\bibitem[Go]{Go} L. Goeritz,
\textit{Die Abbildungen der Brezelfl\"ache und der Volbrezel vom 
Gesschlect 2}, Abh. Math. Sem. Univ. Hamburg {\bf 9} (1933) 244--259.

\bibitem[Gr]{Gr} A.~Gramain, Le type dÕhomotopie du groupe des diff«eomorphismes dÕune surface
compacte, {\em Ann. Scient. «Ec. Norm. Sup. },  {\bf  6} (1973) 53Ð66.

\bibitem[HKMR]{HKMR} S.~Hong, J.~Kalliongis, D.~McCullough, J.~H.~Ruginstein, Di?eomorphisms of 
Elliptic $3$-Manifolds, to appear.

\bibitem[Ha1]{Ha1} A..~Hatcher, Homeomorphisms of sufficiently large $P^2$-irreducible $3$-manifolds, {\em Topology}  {\bf 11}  (1976) 343-347.

\bibitem[Ha2]{Ha2} A..~Hatcher, A proof of the Smale Conjecture, {\em Annals of Mathematics},  {\bf 117}  (1983) 553--607.

\bibitem[JM]{JM} J.~Johnson and D.~McCullough, The space of Heegaard splittings, arXiv:1011.0702.

\bibitem[Pa]{Pa} R.~Palais, Local triviality of the restriction map for embeddings, {\em Comment. Math. Helv.},  {\bf  34} (1960) 305--312.

\bibitem[Po]{Po} J. Powell, \textit{Homeomorphisms of $S^{3}$ leaving a
Heegaard surface invariant}, Trans.  Amer.  Math.  Soc. {\bf
257} (1980) 193--216.

\bibitem[Sc]{Sc}  M.~Scharlemann, Automorphisms of the 3-sphere that preserve a genus two Heegaard splitting, {\em Bol. Soc. Mat. Mexicana} {bf 10} (2004) 503--514.

\bibitem[ST1]{ST1}  M.~Scharlemann, A.~Thompson, Heegaard splittings of (surface) x I are standard, {\em Math. Ann.} {\bf 295} (1993) 549--564.

\bibitem[ST2]{ST2}  M.~Scharlemann, A.~Thompson, Thin position and Heegaard splittings of the $3$-sphere, {\em  J. Differential Geom.}  \textbf{39} (1994), 343--357. 

\bibitem[Wa]{Wa} B.~Wajnryb, Mapping class group of a handlebody, {\em Fund. Math.} {\bf 158}  (1998),  195--228. 
\end{thebibliography}
\end{document}